\def\mcc{M\raise.5ex\hbox{c}C}
\def\mccarthy{M\raise.5ex\hbox{c}Carthy}
\def\eg{{\it e.g. }}
\def\ie{{\it i.e. }}
\def\h{{\cal H}}
\def\M{{\cal M}}
\def\m{Mult}
\def\La{\Lambda}
\def\l{\lambda}
\def\vare{\varepsilon}
\let\i=\infty
\def\={\ = \ }
\def\A{{\cal A}}
\def\F{{\cal F}}
\def\C{\mathbb C}
\def\D{\mathbb D}
\def\inn{\ \in \ }
\def\be{\setcounter{equation}{\value{theorem}} \begin{equation}}
\def\ee{\end{equation} \addtocounter{theorem}{1}}
\def\beq{\begin{eqnarray*}}
\def\eeq{\end{eqnarray*}}
\def\se{\setcounter{equation}{\value{theorem}}} 
\def\att{\addtocounter{theorem}{1}}
\def\bp{{\sc Proof: }}
\def\ep{{}{\hfill $\Box$} \vskip 5pt \par}
\def\bl{\begin{lemma}}
\def\el{\end{lemma}}
\def\bt{\begin{theorem}}
\def\et{\end{theorem}}
\def\bprop{\begin{prop}}
\def\eprop{\end{prop}}
\def\bd{\begin{definition}}
\def\ed{\end{definition}}
\def\br{\begin{remark}}
\def\er{\end{remark}}
\def\bexer{\begin{exercise}}
\def\eexer{\end{exercise}}
\def\bfig{\begin{figure}}
\def\efig{\end{figure}}
\newtheorem{theorem}{Theorem}[section]
\newtheorem{prop}[theorem]{Proposition}
\newtheorem{lemma}[theorem]{Lemma}
\newtheorem{cor}[theorem]{Corollary}
\newtheorem{question}[theorem]{Question}
\newtheorem{claim}[theorem]{Claim}
\theoremstyle{definition}
\newtheorem{example}[theorem]{Example}
\newtheorem{remark}[theorem]{Remark}
\newtheorem{definition}[theorem]{Definition}
\title{Operator theory  and the Oka extension theorem}
\author{Jim Agler
\thanks{Partially supported by National Science Foundation Grant
DMS 1068830}
\and
John E. M\raise.5ex\hbox{c}Carthy
\thanks{Partially supported by National Science Foundation Grant DMS 0966845}
}
\renewcommand{\epsilon}{\varepsilon}
\def\norm#1{\| #1 \|}
\def\normdel#1{{\| #1 \|}_\delta}
\def\normdelgen#1{\| #1 \|_{\delta,{\rm gen}}}
\def\normtdelgen#1{\| #1 \|_{t \delta,{\rm gen}}}
\def\normgamgen#1{\| #1 \|_{\gamma,{\rm gen}}}
\def\norminf#1{{\| #1 \|}_\infty}
\def\normm#1{{\| #1 \|}_m}
\def\set#1#2{\{ #1 \, | \, #2\}}
\def\hinf{H^\infty}
\def\hinfdel{H_\delta^\infty}
\def\hinfm{H_{\bf m}^\infty}
\def\hinfdg{H^\infty_{\delta,{\rm gen}}(E)}
\def\gl{{\mathcal G}_\Lambda}
\def\h{\mathcal{H}}
\def\m{\mathcal{M}}
\def\gdel{G_\delta}
\def\kdel{K_\delta}
\def\gtdel{G_{t\delta}}
\def\cdel{{\mathcal{C}}_\delta}
\def\fdel{{\mathcal{F}}_\delta}
\def\fm{{\mathcal{F}}_m}
\def\ftdelgen{{\mathcal{F}}_{t\delta, {\rm gen}}}
\def\l{\lambda}
\def\d{\delta}
\def\s{\sigma}
\newcommand\bem{\left( \begin{array}}
\newcommand\enm{\end{array} \right)}
\renewcommand\L{{\mathcal L}}
\renewcommand{\O}{\Omega}
\def\fdl{{\mathcal{F}}_{\delta,\Lambda}}
\def\fdg{{\mathcal{F}}_{\delta,{\rm gen}}}
\newcommand\clhd{C{\mathcal L}(\h)^d}
\newcommand\clhnd{C{\mathcal L}(\h_n)^d}
\newcommand\hidg{H^\i_{\d,{\rm gen}}}
\newcommand\hidge{H^\i_{\d,{\rm gen}}(E)}
\def\fdellam{\fdl}
\def\glam{{\mathcal G}_\Lambda}
\def\fdelgen{{\mathcal F}_{\delta, {\rm gen}}}
\def\fgamgen{{\mathcal F}_{\gamma, {\rm gen}}}
\def\dom{\rm dom\ }
\def\ball{\rm ball\ }
\def\hinfdelgen{H^\infty_{\delta,{\rm gen}}}
\def\hinftdelgen{H^\infty_{t \delta,{\rm gen}}}
\def\hinfgamgen{H^\infty_{\gamma,{\rm gen}}}
\def\ztof{f}
\begin{document}

\bibliographystyle{plain}

\maketitle
\begin{abstract}
For $\delta$ an $m$-tuple of analytic functions, we define an algebra $\hidg$, 
contained in the bounded analytic
functions on the analytic polyhedron $ \{ \| \delta^l(z) \| < 1, \ 1 \leq l \leq m \}$,
and prove a representation formula for it.
We give conditions whereby every function that is analytic on a neighborhood of
$ \{ \| \delta^l(z) \| \leq 1, \ 1 \leq l \leq m \}$
  is actually in $\hidg$. We use this to give a proof of the Oka extension theorem with bounds.
We define an $\hidg$ functional calculus for operators.

\end{abstract}


\section{Introduction}

In \cite{amy12b} a new proof of the Oka Extension Theorem for p-polyhedra was given using operator-theoretic methods. While the proof used both the functional calculus for commuting operator-tuples and the Oka-Weil approximation theorem for analytic functions defined on a neighborhood of p-polyhedra, it had the novel feature that it revealed norms for which extensions were obtained with precise bounds.

In this paper we shall improve upon the results from \cite{amy12b} in a number of ways. First, we shall eliminate our reliance on the functional calculus for general operator-tuples developed by J. Taylor, and in its place, use a simple functional calculus for commuting diagonalizable matrix-tuples together with a technical axiom described in
Section 3 of the paper. Secondly, we shall not require the Oka-Weil approximation theorem. Rather, using our technical axiom we will shall give an original proof of  the stronger Oka Extension Theorem \cite{oka36}
 that is entirely elementary in nature. Finally, rather than working on p-polyhedra, we shall work with analytic polyhedra defined in general domains of holomorphy.

Once the program described in the above paragraph is carried out, the door is opened for generalization of Oka and Cartan like results in several complex variables that involve the existence of extensions of holomorphic functions. We illustrate this point by giving a  proof of 
a special case of the Cartan Extension Theorem
for holomorphic functions defined on analytic varieties in domains of holomorphy.

%

Here is a prototypical example.
\begin{example}
\label{exa1}
Let $d = 2$, 
 and $\A = H^\i(\D^2)$, the algebra of all bounded analytic functions on $\D^2$.
It was shown in \cite{ag90} that a function $\phi$ is in $H^\i(\D^2)$ and
has norm less than or equal to $ 1$ if and only if there is a graded Hilbert
space $\L = \L^1 \oplus \L^2$ and a unitary operator $V : \C \oplus \L \to \C \oplus \L$, which can be written in block form as
\[
V \= \bordermatrix{ ~& \C &\L \cr
\C &A&B\cr
\L & C & D \cr
},
\]
so that, writing $P^1$ for the projection from $\L$ onto $\L^1$ and
$P^2$ for the projection from $\L$ onto $\L^2$, we have
\be
\label{eqa2}
 \phi(z) \= A + B (z^1 P^1 + z^2 P^2) \left[
I_\L -  (z^1 P^1 + z^2 P^2) D \right]^{-1} C .
\ee
If now $T = (T^1, T^2)$ is a pair of commuting operators on a Hilbert space $\h$, one can modify (\ref{eqa2}) to
\begin{eqnarray}
\att
\nonumber
 \phi(T) \= &
 I_\h \otimes  A + I_\h \otimes B (T^1 \otimes P^1 + T^2 \otimes P^2) \\
& \left[
I_{\h \otimes \L} -  (T^1\otimes P^1 + T^2 \otimes P^2) I_\h \otimes D \right]^{-1}I_\h \otimes C .
\label{eqa3}
\end{eqnarray}
Formula (\ref{eqa3}) makes sense as long as the spectrum of the operator
$ (T^1\otimes P^1 + T^2 \otimes P^2) I_\h \otimes D$ does not contain the
point $1$. This will be guaranteed if $T^1$ and $T^2$ both have spectrum in the open disk $\D$. If, in addition, one assumes that they are both strict contractions (\ie have norm less than one), then it can be shown that the formula (\ref{eqa3}) gives a 
contractive $H^\i(\D^2)$  functional calculus for $T$. Indeed, the contractivity
follows from calculating $I - \phi(T)^* \phi(T)$ and observing that it is positve; the fact that the map is an algebra homomorphism follows from observing that in both (\ref{eqa2}) and
(\ref{eqa3}), if one expands the inverse in a Neumann series, one gets an absolutely convergent
 series of polynomials, provided $\| z \|$ (respectively, $\| T \|$) is less than one.
\end{example}

In \cite{amy12b},  Example~\ref{exa1} was generalized to polynomial polyhedra as follows.
 Let $\delta = (\d^1,\dots,\d^m) $ be a polynomial mapping,
which we think of  as a map into $\C^m$ with the $\ell^\i$-norm.
Let 
\beq
\gdel & \= & \{ z \in \C^d\, : \, \| \d (z) \| < 1 \} ,\\
\kdel & \= & \{ z \in \C^d\, : \, \| \d (z) \| \leq 1 \},
\eeq
and for any domain $\O \subset \C^d$ let $O(\O)$ denote the algebra of holomorphic
functions on $\O$.
Let us use $\clhd$ to denote the set of all commuting $d$-tuples of bounded linear operators on 
the Hilbert space $\h$.
Define the family of operators $\fdel$ 
 by
\be
\label{eqa111}
\fdel \= \{ T \in \clhd \, : \, 
\| \d(T) \| \leq 1 \}.
\ee
Then define an algebra $\hinfdel$ of holomorphic functions on $\gdel$ by
\be
\label{eqa11}
\hinfdel \= \{ \phi \in O(\gdel) \, : \, \sup \{ \| \phi(T) \| \, : \, T \in \fdel \ {\rm and\ }\sigma(T) \subset \gdel \} < \i \}.
\ee
There is a representation formula for functions in $\hinfdel$ 
very similar to (\ref{eqa2}); this generalizes to operators just as in (\ref{eqa3}).
It was shown in \cite{amy12b} that this leads not only to a new proof of the Oka extension theorem
\cite{oka36},
but that it gives sharp bounds for the norms of the extensions, which the function theory approaches do not yield.

\subsection{Main results}

The definition of  $\hinfdel$, and also of $\fdel$ if $\delta$ is allowed to be a holomorphic mapping on
some domain,  has the drawback that it requires knowledge of the
Taylor spectra of the tuples $T$.
In Section~\ref{secb} we
define versions of these objects in a more direct and elementary way that
avoids Taylor's  theory. This goal is accomplished through
restricting our use of operators in defining norms to the diagonalizable, finite dimensional case.
For commuting diagonalizable matrices there is no mystery to the functional calculus:
one simply chooses a basis of eigenvectors, and applies the function to the joint eigenvalue.

In Section \ref{secb}, we describe a set of matrices $\fdg \subsetneq \fdel$,
and  algebras $\hidg$, and a localized version $\hidge$, that are substitutes for $\hinfdel$.
In Section~\ref{secc} we prove a representation theorem for $\hidg$ and $\hidge$.

If $U$ is a domain of holomorphy, and $\d \in O(U)^m$ is no longer assumed to be 
polynomial, the formal calculations work just as before, leading to an algebra 
$\hidg$ of functions on $\gdel = \{ z \in U : \| \d (z) \| \leq 1 \}$.
To be useful, we would like to know what functions are in $\hidg$.
In particular, we define $\d$ to be utile (Definition~\ref{def4.22} below) if every
function holomorphic on a neighborhood of $\kdel$ is in $\hidg$.

In Section~\ref{secd}, we prove two theorems that give conditions for
$\d$ to be utile in terms of operator theoretic properties of
the set $\fdg$. These conditions are not always satisfied. However, we show in Section~\ref{secsubord} that one can add more functions to the set $\delta$ to get a new family
$\gamma$ so that $K_\gamma = \kdel$, and hence the function theory is unchanged, but so that
$\gamma$ is utile, and therefore one can attack function theory problems using these operator theory tools.

To illustrate the use of these ideas, 
in Section~\ref{sece} we  prove refinements of the Oka extension theorem and the Oka-Weil theorem. In Section~\ref{sech} we prove a special case of the Cartan extension theorem.

\subsection{Functional Calculus}

If $\A$ is a topological algebra that contains the polynomials $\C[z^1, \dots, z^d]$, and 
if $T = (T^1, \dots, T^d)$ is a $d$-tuple of commuting operators on a Banach space~$\mathcal X$,
an {\em $\A$-functional calculus for $T$} is a
continuous homomorphism $\pi$ from  $\A$ to $B({\mathcal X})$
such that $\pi(z^l) = T^l$ for each $1 \leq l \leq d$.

In the ground-breaking papers \cite{tay70b,tay70a}, J.L. Taylor developed a functional calculus
for the algebras $O(U)$, where $O(U)$ denotes the algebra of all holomorphic functions on 
the open set $U \subseteq \C^d$. This was a multi-variable version of the Riesz-Dunford calculus
in one variable. He also defined a spectrum for commuting $d$-tuples, now called the Taylor spectrum, and showed that for any commuting $d$-tuple $T$, there was an  $O(U)$ functional calculus for $T$
if and only if the Taylor spectrum of $T$ lay in $U$.

A drawback to Taylor's approach is that it is often difficult to determine what the spectrum of $T$ is,
and the formulas defining $\phi(T)$ can be too complicated to handle. (We shall routinely write $\phi(T)$ for $\pi(\phi)$).

 In Section~\ref{secfc} we propose a different approach.
The key idea is that
we choose some basic functions $\delta^l$, and then define an $\hidg$
functional calculus by writing down a relatively simple formula
for $\phi(T)$ in terms of $\delta(T)$, that gives a well-defined operator whenever $\| \delta (T) \| < 1$.

The formula (a modification of (\ref{eqa3}))
 can be thought of as a multi-variable version of the Sz.-Nagy-Foia\c{s}
functional calculus, which is an $H^\i(\D)$ functional calculus for (single)
Hilbert space contractions  that have spectrum in the open unit disk $\D$, or, more generally, have no unitary summand \cite{szn-foi}. Our approach generalizes ideas of C.-G.  Ambrozie and D. Timotin
\cite{at03}, of J.A. Ball and V. Bolotnikov \cite{babo04}, and of the authors and N.J. Young
\cite{amy12b}.

Our approach leads in Section~\ref{secfc}  to  a functional calculus for any $p$-polyhedron
(a set of the form $\kdel$), or, more generally, any rationally convex set. The rational Oka-Weil theorem asserts that any holomorphic function
on a neighborhood of a rationally convex set is a limit of rational functions, so at first blush
the existence of the functional calculus is unremarkable. However, our functional calculus
comes with a {\em norm control}, and this is its key property.
In Theorem~\ref{thmfc2}, we show there is no ambiguity: when $\phi(T)$ is defined both by our definition and
by Taylor's, the two definitions coincide.


\section{ $\hinfdg$}
\label{secb}

Let $U$ be a domain of holomorphy in $\C^d$, fixed
throughout the paper.
We will take $\d = (\d_1,\dots, \d_m) \inn O(U)^m$ to be an $m$-tuple of non-constant
holomorphic functions on $U$ and think of it as a map into $\C^m$ with the $\ell^\i$-norm.
(More generally, one can consider $\d$ as a map into the $r$-by-$s$ matrices. This is natural
if one wishes to define a functional calculus not for $d$-tuples $T$ of commuting contractions,
but defined by other inequalities, for example if $T$ is a row-contraction, defined
by $T^1 T^{1*} + \dots + T^d T^{d*} \leq I$. For convenience
we will not do our calculations in this generality).

We define two analytic polyhedra in $U$ by 
$\gdel = \d^{-1} (\D^m)$ and $\kdel = \d^{-1}(\overline{\D ^m})$.
{\em We shall always assume that $\kdel$ is compact.}
If $T$ is a $d$-tuple of pairwise commuting operators acting on an $N$ 
dimensional Hilbert space, we say that $T$ is {\em generic} if there exist $N$ linearly
independent joint eigenvectors whose corresponding joint eigenvalues are
 distinct (equivalently, $T$ is diagonalizable and $\s(T)$ has cardinality $N$).
 If
$\La = \{ \l_1, \dots, \l_N \}$ is a set consisting of exactly $N$ distinct points in $\C^d$, we
let $\gl$ denote the set of generic $d$-tuples of pairwise commuting operators $T$
acting on $\C^N$ with $\s(T) = \La$.
We shall let superscripts identify components of $d$-tuples and subscripts
identify components of $N$-tuples. Thus, if  $T = (T^1, \dots, T^d) \in \gl$, then there
exist $N$ vectors $k_1, \dots, k_N \in \C^N$  such that
\[
T^r k_j \= \l^r_j k_j 
\]
for $1 \leq r \leq d$ and $1 \leq j \leq N$. 
If $T \in \gl$ and $f$ is holomorphic on a
neighborhood of $\s(T)$, then we define  $f(T)$ to be the unique operator on
$\C^N$ that satisfies
\be
\label{eqb2}
f(T) k_j \= f(\l_j) k_j, \qquad 1 \leq j \leq N .
\ee

We now define generic and local versions of the sets $\fdel$, $\hinfdel$ and the norm
 $\normdel{\cdot}$ from \cite{amy12b}. Fix a set $E \subseteq \gdel$ and let $E^\C$ denote the space of complex valued functions on $E$. For finite $\Lambda \subset \gdel$, let
 $\fdellam$ be the set in $\glam$ defined by
\be \label{eq2.20}
\fdellam = \set{T \in \glam}{\norm{{\delta}_l(T)} \le 1, l=1,\ldots,m}.
\ee
We then define a collection of generic operators, $\fdelgen$, by setting
\be \label{eq2.25}
\fdelgen = \bigcup_{\substack{\Lambda \text{finite}\\ \Lambda \subset \gdel}}\fdellam.
\ee
For $\ztof \in E^\C$, define $\normdelgen{\ztof}$ by the formula,
\be\label{eq2.30}
\normdelgen{\ztof} = \sup_{\substack{T \in \fdelgen\\ \sigma(T) \subseteq E}} \norm{\ztof(T)}
\ee
Note that the notation, $\normdelgen{\ztof}$ sees $E$ due to the fact that $E = \dom \ztof$. Finally, we define $\hinfdelgen(E)$ to consist of all $\ztof \in E^\C$ such that $\normdelgen{\ztof}$ is finite.

Before continuing, we remark that the space $\hinfdelgen(E)$, as just defined, refines the space $\hinfdel$ considered in \cite{amy12b} in two ways. First, by suping over generic operators rather than operators with spectrum in $\gdel$, use of the Taylor functional calculus is avoided (though {\em a priori} it may be the case that $\normdelgen{\cdot} \ne \normdel{\cdot}$).
 Secondly, we have localized the definition of $\hinfdel$ through the introduction of the set $E$.
 This represents a considerably more general scheme than considered in \cite{amy12b}. In particular, our work here will make it clear that many of the proofs in \cite{amy12b} do not require that the elements of $\hinfdel$ be holomorphic. However, when $E=\gdel$ and $\ztof=\phi$ is holomorphic on $\gdel$ it turns out that that $\normdelgen{\ztof} = \normdel{\phi}$ --- see Theorem~\ref{thm3.10}.

The following proposition is a straightforward analog of Proposition 2.4 in \cite{amy12b}. For $E$ a set, we let $\ell^\infty(E)$ denote the Banach algebra of bounded complex valued functions $\ztof$ on $E$ equipped with the norm,
$$\norminf{\ztof} = \sup_{e \in E}|\ztof(e)|.$$
\begin{prop}\label{prop2.10}
$\hinfdelgen(E)$ equipped with $\normdelgen{\cdot}$ is a Banach algebra. Furthermore, $\hinfdelgen(E) \subseteq \ell^\infty(E)$ and $\norminf{\ztof} \le \normdelgen{\ztof}$ for all $\ztof \in \hinfdelgen$.
\end{prop}
\begin{proof}
The only assertion that is not immediately obvious is that $\hinfdelgen(E)$ is complete.
Let $f_n$ be a Cauchy sequence in $\hinfdelgen(E)$. It converges in $\ell^\i(E)$ to a function $f$.
For any $T$ in $\fdelgen$ with $\sigma(T) \subseteq E$, we have
$f_n(T) \to f(T)$. Therefore
\[
\| f(T) \| \ \leq \ \sup_n \| f_n(T) \| \leq \sup_n \normdelgen{ f_n} ,
\]
so $f$ is in $\hinfdelgen(E)$ as required.
\end{proof}
Further useful regularity of the elements of $\hinfdelgen(E)$ is revealed in the following proposition. For $\alpha_1, \alpha_2 \in \D$, we recall that the \emph{Caratheodory pseudometric}, $d$, is defined on $\D$ by the formula,
$$d(\alpha_1,\alpha_2) = \Big|\frac{\alpha_2-\alpha_1}{1-\overline{\alpha_1}\alpha_2}\Big|,\ \ \alpha_1,\alpha_2 \in \D.$$
\begin{prop}\label{prop2.20}
If $\ztof \in \ball \hinfdelgen(E)$ and $\lambda_1,\lambda_2 \in E$, then
\be\label{eq2.38}
d(\ztof(\lambda_1),\ztof(\lambda_2)) \le \max_{1 \le l \le m}d(\delta_l(\lambda_1),\delta_l(\lambda_2))
\ee
\end{prop}
\begin{proof}
Fix $\lambda_1,\lambda_2 \in E$ with $\lambda_1 \ne \lambda_2$ and set $\Lambda=\{\lambda_1,\lambda_2\}$. For $T \in \glam$, let $\theta_T$ denote the angle between the eigenspaces of $T$. By the remark following (2.7) in \cite{ag91}, elements of $\glam$ are determined uniquely by this angle between their eigenspaces. Furthermore, by the calculations
 immediately following (2.13) in \cite{ag91}, if $g$ is a function defined on $\Lambda$ and $T \in \glam$, then
\be\label{eq2.40}
\norm{g(T)} \le 1 \iff \sin\theta_T \ge d(g(\lambda_1), g(\lambda_2)).
\ee
Recalling \eqref{eq2.20}, we see that \eqref{eq2.40} implies that if $T \in \glam$, then
\be\label{eq2.50}
T \in \fdellam \iff \sin \theta_T \ge \max_{1 \le l \le m}d(\delta_l(\lambda_1),\delta_l(\lambda_2)).\notag
\ee
In particular, there exists $T \in \fdellam$ such that
\be\label{eq2.60}
\sin \theta_T = \max_{1 \le l \le m}d(\delta_l(\lambda_1),\delta_l(\lambda_2)).\notag
\ee
As for this $T$ $\norm{\ztof(T)} \le 1$, we deduce from \eqref{eq2.40} that \eqref{eq2.38} holds.
\end{proof}
\begin{cor}\label{cor2.10}
If $\ztof \in \hinfdelgen(E)$, then $\ztof$ is continuous on $E$.
\end{cor}
\begin{proof}
If $\ztof\ne 0$ in $\hinfdelgen(E)$, then Propositon \ref{prop2.20} implies that $\normdelgen{\ztof}^{-1}\ztof$ is continuous. 
\end{proof}
The equivalence of conditions (a), (b), and (c) in the next proposition is an exact analog of Proposition 2.7 from \cite{amy12b}.
\begin{prop}\label{prop2.30}
Let $E \subset \gdel$ be finite. The following are equivalent.
\begin{align*}
&(a)\ \hinfdelgen(E) = \ell^\infty(E).\\
&(b)\ \lambda^r|_E \in \hinfdelgen(E) \text{ for } r=1,\ldots,d. \quad \quad \quad \quad \\
&(c)\  {\mathcal F}_{\delta,E} \text{ is bounded}.\\
&(d)\ \delta|_E \text{ is } 1-1.
\end{align*}
\end{prop}
\bp
It is obvious that {\em(a)} implies {\em(b)} and that {\em(b)} implies {\em (c)}.

$(c) \Rightarrow (d)$: Let $E = \{ e_1, \dots, e_N \}$, and assume that 
$\d(e_1) = \d(e_2)$.
Choose vectors $k_1, \dots, k_N$ so that $\{ k_2, \dots, k_N\}$ are orthonormal, and
$k_1$ is orthogonal to $\{k_3,\dots, k_N\}$, but at angle $\theta$ to $k_2$.
Then $T$ defined by $T^r k_j = e^r_j k_j$ is in ${\mathcal F}_{\delta,E} $, but
the norm of $T$ will tend to infinity as $\theta$ tends to $0$.

$(d) \Rightarrow (a)$: It is sufficient to prove that for each $1 \leq j \leq N$, the
function $f_j$ defined by $$
f_j(e_i) = 0, i \neq j, \qquad f_j(e_j) = 1$$
is in  $\hinfdelgen(E)$.
Since $\delta$ is one-to-one, there exists some function $g_j:\d(E) \to \C$ such that
$g_j \circ \d = f_j$. To prove that $f_j$ is in  $\hinfdelgen(E)$, it suffices to prove that
$g_j$ is bounded on $m$-tuples of commuting generic contractions.
This latter assertion follows from Lemma~\ref{lemb5}.
\ep

\bl
\label{lemb5} Let $\Lambda = \{ \l_1, \dots, \l_N \}$ be a set of $N$ distinct points in $\D^m$.
The function $g_j$, defined on $\Lambda$ by $g_j(\l_i) = \delta_{ij}$, is  bounded
on $m$-tuples of commuting contractions.
\el
\bp
For each $i \neq j$, choose $r_i$ so that $\l_j^{r_i} \neq \l_i^{r_i}$.
Define $h$ by 
\[
h(\l) \=
\prod_{i\neq j} \frac{\l^{r_i} - \l_i^{r_i} }{1 - \overline{ \l_i^{r_i}} \l^{r_i}} .
\]
By von Neumann's inequality, each factor in $h(T)$ has norm at most one if $T$ is an $m$-tuple of
commuting contractions. Let $g_j = \frac{1}{h(\l_j)} h$.
\ep


\section{Representing elements of $\hinfdg$}
\label{secc}
For ${\mathcal X}$ a Banach space, we set $\ball {\mathcal X} = \set{x \in {\mathcal X}}{\norm{x} \le 1}$.
In this section we shall derive three conditions that are necessary and sufficient for a 
given $\ztof \in E^\C$ to be an element of $\ball \hinfdelgen(E)$. 
These conditions, which are summarized in Theorem \ref{thm3.10} below,
 represent the appropriate extension of Theorem 4.5 from \cite{amy12b} to the $\hinfdelgen(E)$ setting. To 
state the theorem will require a number of definitions.
\bd
\label{def3.10}
Let $\ztof$ be a function on $E$. We say a $4$-tuple $(a,\beta,\gamma,D)$ is a $(\delta,E)$-realization for $\ztof$ if $a \in \mathbb{C}$ and there exists a decomposed Hilbert space, $\m = \oplus_{l=1}^m \m_l$, such that\\ \\
(i) the $2\times2$ matrix,
$$V=\begin{bmatrix}a&1\otimes\beta\\\gamma\otimes1&D\end{bmatrix},$$
acts isometrically on $\mathbb{C}\oplus\m$,\\ \\
(ii) for each $\lambda \in E$, $\delta(\lambda)$ acts on $\m$ via the formula,
\be
\label{eqc16}
\delta(\lambda)(\oplus_{l=1}^m x_l)=\oplus_{l=1}^m \delta_l(\lambda)x_l, 
\ee
(iii) for each $\lambda \in E$,
\be
\label{eqc17}
\ztof(\lambda)=a+<\delta(\lambda){(1-D\delta(\lambda))}^{-1}\gamma,\beta>.
\ee
\ed
\bd
\label{def3.20}
For $E\subseteq \gdel$, we let $\cdel(E)$ denote the collection of functions $h$ defined on $E \times E$ that have the form,
$$h(\lambda,\mu)=\sum_{l=1}^m \big(1-\overline{\delta_l(\mu)}\delta_l(\lambda)\big)a_l(\lambda,\mu),\ \ \lambda,\mu \in E,$$
where for $l=1,\ldots,m$, $a_l(\lambda,\mu)$ is positive definite on $E$.
\ed
\bl
\label{lem3.10}
If $E\subset \gdel$ is finite and $\ztof \in \ball \hinfdelgen(E)$, then $1-\overline{\ztof(\mu)}\ztof(\lambda) \in \cdel(E)$.
\el
\bp This follows from a Hahn-Banach separation argument, as in \cite{ampi}*{Section 11.1}.
\ep

\begin{definition}\label{def3.30}
Let $m$ be a positive integer and $\h$ a separable infinite dimensional Hilbert space. We let $\fm$ denote the collection of pairwise commuting $m$-tuples of contractions acting on $\h$. If $F\in O(\D^m)$, we define $\normm{F}$ by
\be\label{eq3.10}
\normm{F}=\sup_{\substack{T\in \fm\\ \sigma(T) \subset \D^m}}\norm{F(T)}.
\ee
We let $\hinfm$ denote the set of all $F\in O(\D^m)$ such that $\normm{F} < \infty$.
\end{definition}
\begin{prop}\label{prop3.10}
$\hinfm$ equipped with $\normm{\cdot}$ is a Banach algebra. Furthermore, $\hinfm \subseteq \hinf$ and if $F \in \hinfm$, then $\sup_{z \in \D^m}|F(z)| \le \normm{F}$.
\end{prop}
\bp This is proved for example in \cite[Prop. 2.4]{amy12b}.
\ep

\begin{lemma}\label{lem3.20}
If $\{F_n\}$ is a sequence in $\ball{\hinfm}$, $F\in O(\D^m)$ and $F_n \to F$ in $O(\D^m)$, then $F\in \ball \hinfm$.
\end{lemma}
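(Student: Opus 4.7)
My plan is to show directly that $\normm{F} \le 1$, which simultaneously establishes $F \in \hinfm$ and $F \in \ball\hinfm$. Fix an arbitrary $T \in \fm$ with $\sigma(T) \subset \D^m$. I want to show $\|F(T)\| \le 1$; taking the supremum over such $T$ then finishes the proof.

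The key ingredient is the continuity of the Taylor functional calculus. Since $\sigma(T)$ is a compact subset of the open polydisk $\D^m$, there exists $r \in (0,1)$ such that $\sigma(T) \subset (r\D)^m \subset \D^m$. For any open set $U \supset \sigma(T)$, the Taylor functional calculus provides a continuous algebra homomorphism $O(U) \to B(\h)$, where $O(U)$ carries the topology of uniform convergence on compacta. Applying this with $U = \D^m$, the hypothesis $F_n \to F$ in $O(\D^m)$ yields $F_n(T) \to F(T)$ in operator norm.

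Since $F_n \in \ball\hinfm$, we have $\|F_n(T)\| \le 1$ for every $n$, and by norm convergence
\[
\|F(T)\| \= \lim_{n \to \i} \|F_n(T)\| \ \leq \ 1.
\]
As $T$ was an arbitrary element of $\fm$ with spectrum in $\D^m$, this gives $\normm{F} \le 1$, so $F \in \ball\hinfm$ as required.

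The only point that requires some care is the continuity of the Taylor functional calculus in the function variable; this is a standard fact from \cite{tay70b,tay70a} (it follows from the integral-formula description of $F(T)$ once one fixes a resolution on a neighborhood of $\sigma(T)$, since such a resolution can be reused for all $F_n$ sufficiently close to $F$ in $O(\D^m)$). No subtler argument is needed, and no appeal to the new machinery of Section~\ref{secb} is required for this lemma.
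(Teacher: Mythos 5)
Your argument is correct and is essentially the paper's own proof: both fix $T \in \fm$ with $\sigma(T) \subset \D^m$, use continuity of the (Taylor) functional calculus in the function variable to get $F_n(T) \to F(T)$ in operator norm from $F_n \to F$ in $O(\D^m)$, and then pass the bound $\norm{F_n(T)} \le 1$ to the limit before taking the supremum over $T$. The paper simply states the convergence $F_n(T) \to F(T)$ without elaboration, whereas you justify it explicitly; there is no substantive difference.
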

\begin{proof}
If $F_n \to F$ and $\sigma(T) \subset \D^m$, then $F_n(T) \to F(T)$ in operator norm. Hence, if $T\in \fm$ and $\sigma(T) \subset \D^m$,
$$\norm{F(T)} = \lim_{n \to \infty}\norm{F_n(T)} \le 1.$$
Therefore $\normm{F} \le 1$.
\end{proof}
\begin{lemma}\label{lem3.30}
If $F \in \ball \hinfm$, then there exist $m$ positive definite functions on $\D^m$, $A_1,\ldots,A_m$, such that
$$1-\overline{F(w)}F(z) = \sum_{l=1}^m (1-\overline{w_l}z_l)A_l(z,w)$$
for all $z,w \in \D^m$.
\end{lemma}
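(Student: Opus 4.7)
The plan is to reduce to Lemma~\ref{lem3.10} on every finite subset of $\D^m$ and then assemble the pieces by a pointwise compactness argument.

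First I would specialize the setup of Section~\ref{secb} by taking $U = \D^m$ and $\delta : \D^m \to \D^m$ to be the identity, so that $\gdel = \D^m$ and a generic tuple $T \in \fdellam$ for finite $\Lambda \subset \D^m$ is precisely a commuting $m$-tuple of contractions with spectrum $\Lambda \subset \D^m$. Amplifying such $T$ to the ambient separable Hilbert space $\h$ by direct sum with $0$ puts it in $\fm$ with spectrum still inside $\D^m$, so $F \in \ball \hinfm$ forces $F|_E \in \ball \hinfdelgen(E)$ for every finite $E \subset \D^m$. Lemma~\ref{lem3.10} then yields positive definite functions $A_1^{(E)}, \ldots, A_m^{(E)}$ on $E$ satisfying
\[
1 - \overline{F(w)} F(z) \= \sum_{l=1}^m (1 - \overline{w_l} z_l) A_l^{(E)}(z, w), \qquad z, w \in E.
\]

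Next I would extract uniform pointwise bounds. Setting $z = w$ expresses $1 - |F(z)|^2 \le 1$ as a sum of non-negative terms $(1 - |z_l|^2) A_l^{(E)}(z, z)$, so $A_l^{(E)}(z, z) \le (1 - |z_l|^2)^{-1}$, and Cauchy--Schwarz for positive definite kernels then gives $|A_l^{(E)}(z, w)| \le ((1 - |z_l|^2)(1 - |w_l|^2))^{-1/2}$. These bounds are independent of $E$.

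Finally I would construct the global $A_l$ by a pointwise limit over the directed set of finite subsets of $\D^m$. The cleanest device is to fix an ultrafilter $\mathcal{U}$ on that directed set refining the Fr\'echet filter, and to set $A_l(z, w) = \lim_{\mathcal{U}} A_l^{(E)}(z, w)$, with the limit taken over $E \supset \{z, w\}$; the uniform bounds above guarantee the limit exists. Positive definiteness of $A_l$ on $\D^m$ is then automatic, since for any finite collection of points and scalars the relevant quadratic form is an ultrafilter limit of non-negative numbers, and the decomposition identity passes to the limit because it holds on every $E$ containing $z$ and $w$. The main obstacle is precisely this passage from finite subsets to all of $\D^m$; the ultrafilter approach makes it painless because positive definiteness and the decomposition identity are both finitary pointwise conditions preserved under bounded limits, whereas a more constructive route via Kolmogorov decompositions and weak compactness of the resulting Hilbert-space vectors would require extra continuity work to extend from a countable dense subset to $\D^m$.
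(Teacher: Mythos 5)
Your argument is essentially correct, but it is worth noting that the paper does not prove Lemma~\ref{lem3.30} at all: it simply cites \cite{ag90}, where Agler established the polydisc decomposition. What you have written is, in effect, a reconstruction of the standard proof of that cited theorem inside the paper's own framework: specialize $\delta$ to the coordinate functions, invoke the finite-set separation result (Lemma~\ref{lem3.10}) to get decompositions on finite subsets, extract uniform bounds on the kernels from the diagonal, and pass to a limit. This buys self-containedness — Lemma~\ref{lem3.30} then follows from the same cone-separation machinery the paper already uses, and there is no circularity since you only use Lemma~\ref{lem3.10}, not Theorem~\ref{thm3.10} — at the cost of the compactness bookkeeping that the citation avoids. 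Your reduction step is sound: a finite-dimensional generic tuple of commuting contractions with spectrum in $\D^m$, direct-summed with the zero tuple on an infinite-dimensional space, lies in $\fm$ with spectrum still in $\D^m$, so $F \in \ball \hinfm$ does control it, and the diagonal estimate $A_l^{(E)}(z,z) \le (1-|z_l|^2)^{-1}$ together with Cauchy--Schwarz gives bounds independent of $E$.

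Two small repairs. First, to quote Lemma~\ref{lem3.10} literally you should take $U=\C^m$ with $\delta_l(z)=z_l$ (polynomials), so that $\kdel = \overline{\D^m}$ is compact as the paper's standing hypothesis requires; with $U=\D^m$ and $\delta$ the identity, $\kdel=\D^m$ is not compact (harmless for the finite-set lemma, but it violates the stated framework). Second, the ultrafilter should be chosen to contain the section (tail) filter of the directed set of finite subsets, i.e.\ every set of the form $\{E : E \supseteq E_0\}$, not merely the Fr\'echet filter of cofinite index sets: a cofinite-refining ultrafilter need not contain $\{E : E \supseteq \{z,w\}\}$, and it is exactly membership of these tails that makes $\lim_{\mathcal U} A_l^{(E)}(z,w)$ well defined and lets the identity and positive definiteness (both finitary conditions holding on every $E$ containing the relevant points) pass to the limit. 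With that wording fixed, your limiting argument is fine; a normal-families route via Kolmogorov decompositions would work too, as you note, but needs the extra continuity step you mention.
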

\bp This was proved in \cite{ag90}.
\ep
\begin{theorem}\label{thm3.10}
Let $E \subseteq \gdel$ and let $\ztof \in E^\C$. The following conditions are equivalent.
\begin{align*}
&(a)\quad \ztof \in \ball \hinfdelgen(E).\\
&(b)\quad [1-\overline{\ztof(\mu)}\ztof(\lambda)] \in \cdel(E).\\
&(c)\quad \ztof \text{ has a } (\delta ,E)-realization.\\
&(d)\quad \exists_{F \in \ball \hinfm} \ \forall_{\lambda \in E}\ \ \ztof(\lambda)=F(\delta(\lambda)).
\end{align*}
\end{theorem}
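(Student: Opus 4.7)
The plan is to run the cycle $(a)\Rightarrow(b)\Rightarrow(c)\Rightarrow(d)\Rightarrow(a)$. Three of the four arrows are variants of the classical lurking-isometry/transfer-function calculation familiar from the Agler-Pick-interpolation literature; the only piece that is not essentially automatic is upgrading the finite-$E$ version of $(b)$ in Lemma~\ref{lem3.10} to arbitrary $E$.

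For $(a)\Rightarrow(b)$, on every finite $E_\alpha\subseteq E$ Lemma~\ref{lem3.10} furnishes positive definite kernels $a_l^\alpha$ on $E_\alpha$ witnessing the decomposition. Evaluating that decomposition at $\lambda=\mu$ gives
\[
(1-|\delta_l(\lambda)|^2)\,a_l^\alpha(\lambda,\lambda)\;\leq\;1-|\ztof(\lambda)|^2\;\leq\;1,
\]
so $a_l^\alpha(\lambda,\lambda)$ is dominated pointwise by $(1-|\delta_l(\lambda)|^2)^{-1}$, and Cauchy-Schwarz passes the bound to the off-diagonal. A Tychonoff compactness argument on the set of positive definite kernels majorized by this envelope yields a pointwise cluster point $a_l$ on $E$, still positive definite, to which the decomposition passes in the limit.

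The step $(b)\Rightarrow(c)$ is the main obstacle and is the lurking-isometry construction. Realize each $a_l$ as $a_l(\lambda,\mu)=\langle k_\lambda^l,k_\mu^l\rangle_{\mathcal{M}_l}$ in a Hilbert space $\mathcal{M}_l$ (e.g.\ its RKHS), put $\mathcal{M}=\bigoplus_l \mathcal{M}_l$ and $k_\lambda=\bigoplus_l k_\lambda^l$, and let $\delta(\lambda)$ act on $\mathcal{M}$ via (\ref{eqc16}). The identity in $(b)$ rearranges to
\[
1+\langle \delta(\lambda)k_\lambda,\delta(\mu)k_\mu\rangle\;=\;\overline{\ztof(\mu)}\ztof(\lambda)+\langle k_\lambda,k_\mu\rangle,
\]
which says precisely that the assignment
\[
\begin{pmatrix}1\\ \delta(\lambda)k_\lambda\end{pmatrix}\;\longmapsto\;\begin{pmatrix}\ztof(\lambda)\\ k_\lambda\end{pmatrix}
\]
extends linearly to an isometry from the span of the left-hand vectors into $\mathbb{C}\oplus\mathcal{M}$. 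I would then extend this partial isometry to an isometry $V\colon\mathbb{C}\oplus\mathcal{M}\to\mathbb{C}\oplus\mathcal{M}$, enlarging the individual summands $\mathcal{M}_l$ as necessary so that the block structure required by (\ref{eqc16}) is preserved. Reading off the four entries of $V$ produces $a,\beta,\gamma,D$, and the second row of $V\,(1,\delta(\lambda)k_\lambda)^T=(\ztof(\lambda),k_\lambda)^T$ solves as $k_\lambda=(I-D\delta(\lambda))^{-1}\gamma$, which substituted in the first row yields (\ref{eqc17}).

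For $(c)\Rightarrow(d)$, define
\[
F(z)=a+\bigl\langle z(I-Dz)^{-1}\gamma,\,\beta\bigr\rangle,\quad z\in\mathbb{D}^m,
\]
where $z$ acts on $\mathcal{M}$ by (\ref{eqc16}); since $\|D\|\leq1$ and $\|z\|<1$ make $I-Dz$ boundedly invertible, $F$ is holomorphic on $\mathbb{D}^m$ and visibly $F(\delta(\lambda))=\ztof(\lambda)$. To see that $F\in\ball\hinfm$, apply the same formula with any $T\in\fm$, $\sigma(T)\subset\mathbb{D}^m$, replacing $z$ by $T_\mathcal{M}:=\sum_l T^l\otimes P_l$ on $\mathcal{H}\otimes\mathcal{M}$, and exploit $V^*V=I$ in the standard way to obtain
\[
I-F(T)^*F(T)\;=\;X_T^*\bigl(I-T_\mathcal{M}^*T_\mathcal{M}\bigr)X_T\;\geq\;0,
\]
with $X_T=(I-T_\mathcal{M}(I_\mathcal{H}\otimes D))^{-1}(I_\mathcal{H}\otimes\gamma)$. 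Finally, $(d)\Rightarrow(a)$ is immediate: for $T\in\fdelgen$ with $\sigma(T)\subseteq E$, definition (\ref{eqb2}) and the eigenvector calculation give $\ztof(T)=F(\delta(T))$ on $\mathbb{C}^N$; enlarging $\delta(T)$ by a zero summand on a separable infinite-dimensional space puts it into $\fm$ with spectrum still in $\mathbb{D}^m$, whence $\|\ztof(T)\|\leq\normm{F}\leq 1$.
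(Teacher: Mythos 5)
Your cycle $(a)\Rightarrow(b)\Rightarrow(c)\Rightarrow(d)\Rightarrow(a)$ is the same as the paper's, and three of your arrows coincide with it in substance: your $(b)\Rightarrow(c)$ is the lurking isometry the paper merely cites, and your $(d)\Rightarrow(a)$, including the zero-summand device to place $\delta(T)$ in $\fm$, is the paper's argument (done slightly more carefully on the dimension point). The genuine divergence is at $(a)\Rightarrow(b)$. The paper takes a dense sequence $\{\lambda_k\}$ in $E$, applies Lemma~\ref{lem3.10} to each $\Lambda_n$, runs $(b)\Rightarrow(c)\Rightarrow(d)$ on $\Lambda_n$ to get $F_n\in\ball\hinfm$, extracts a normal-families limit $F$ (Lemma~\ref{lem3.20}), uses the continuity of $\ztof$ (Corollary~\ref{cor2.10}) to get $\ztof=F\circ\delta$ on all of $E$, and only then invokes the Agler decomposition of $F$ (Lemma~\ref{lem3.30}) and composes with $\delta$. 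You stay entirely at the kernel level: the diagonal estimate $a_l^\alpha(\lambda,\lambda)\le (1-|\delta_l(\lambda)|^2)^{-1}$, Cauchy--Schwarz off the diagonal, and a Tychonoff cluster point (after extending each $a_l^\alpha$ by zero off $E_\alpha\times E_\alpha$, which preserves positive definiteness) give the decomposition on $E$ directly from the finite case. That is a legitimate and more self-contained route, avoiding Lemmas~\ref{lem3.20} and \ref{lem3.30} and the continuity corollary; the paper's route has the side benefit of producing the representing function $F$ of condition $(d)$ explicitly while proving $(b)$, whereas you recover $(d)$ only by continuing around the cycle.

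One formula in your $(c)\Rightarrow(d)$ needs repair. You assert $I-F(T)^*F(T)=X_T^*\bigl(I-T_P^*T_P\bigr)X_T$ with $X_T=(I-T_P(I_\h\otimes D))^{-1}(I_\h\otimes\gamma)$, where $T_P=\sum_{l=1}^m T^l\otimes P_l$. The correct factor is $X_T=(I-(I_\h\otimes D)T_P)^{-1}(I_\h\otimes\gamma)$, the operator analogue of $k_\lambda=(1-D\delta(\lambda))^{-1}\gamma$ in (\ref{eqc17}) and the order used in the paper's formula (\ref{eqfc1}). Since $D$ need not commute with the projections $P_l$, the two resolvents genuinely differ, and with your order the identity already fails for $m=2$, $\h=\C$, $V$ a $3\times 3$ unitary with $a=0$ and $D$ a rank-one nilpotent, where $F(z)$ is a unimodular multiple of $z^1z^2$: the correct $X_T$ gives $1-|z^1z^2|^2$, yours gives $(1-|z^1|^2)|z^1|^2+1-|z^2|^2$. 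With the corrected $X_T$ the computation is the standard one: the ampliated isometry sends $(h,\,T_PX_Th)$ to $(F(T)h,\,X_Th)$ and $I-T_P^*T_P=\sum_l (I-T^{l*}T^l)\otimes P_l\ge 0$. A second small point, which the paper also leaves tacit in its one-line claim that $F\in\ball\hinfm$: to conclude $\normm{F}\le 1$ you must know that the transfer-function expression agrees with $F(T)$ as used in Definition~\ref{def3.30}; this follows from the Neumann-series argument of Theorem~\ref{thmfc2}, since $(I_\h\otimes D)T_P$ has spectral radius less than $1$ when $\sigma(T)\subset\D^m$.
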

\begin{proof}
That (b) implies (c) follows from a Lurking Isometry argument. To see that (c) implies (d), let $(a,\beta,\gamma,D)$ be a realization for $\ztof$ as in Definition \ref{def3.10}. If we define $F$ by the formula,
\be
\label{eqc23}
 F(Z)=a+<Z{(1-DZ)}^{-1}\gamma,\beta>,
\ee
then $F \in \ball \hinfm$ and $\ztof=F\circ\delta|E$. To see that (d) implies (a), fix $F \in \ball \hinfm$ and assume that $\ztof=F\circ\delta|E$. If $\Lambda$ is a finite subset of $E$ and $T \in \fdellam$, then $\delta(T) \in \fm$. Hence,
$$\norm{\ztof(T)} = \norm{F(\delta(T))} \le 1.$$

We now turn to the task of proving that (a) implies (b). 
Assume that $\ztof \in \ball \hinfdelgen(E)$. Let $\{\lambda_k\}$ be 
a dense sequence in $E$, and  for $n\ge1$, set $\Lambda_n = \{\lambda_1,\ldots,\lambda_n\}$. Fix $n$. As $\ztof \in \ball \hinfdelgen(E)$, $\ztof \in \ball \hinfdelgen(\Lambda_n)$. Hence, by Lemma \ref{lem3.10}, $[1-\overline{\ztof(\mu)}\ztof(\lambda)] \in \cdel(\Lambda_n)$. As (b) implies (c) and (c) implies (d), there exists $F_n \in \ball \hinfm$ such that $\ztof|\Lambda_n = (F_n \circ \delta)|\Lambda_n$.

By Proposition \ref{prop3.10}, $\{F_n\}$ is uniformly bounded on $\D^m$.
 Hence, there exist $F\in O(\D^m)$ and a subsequence $\{F_{n_i}\}$ such that $\{F_{n_i}\} \to F$ as $i \to \infty$ in $O(\D^m)$.
 By Lemma \ref{lem3.20}, $F \in \ball \hinfm$. Furthermore, as $\ztof|\Lambda_n = (F_n \circ \delta)|\Lambda_n$ for each $n$, $\ztof(\lambda_k) = F(\delta(\lambda_k))$ for all $k$. But $\{\lambda_k\}$ is dense in $E$, so that, by Corollary \ref{cor2.10}, $\ztof(\lambda) = F(\delta(\lambda))$ for all $\lambda \in E$. Consequently, if $A_1,\ldots,A_m$ are as in Lemma \ref{lem3.30},
$$ 1-\overline{\ztof(\mu)}\ztof(\lambda)=1-\overline{F(\delta(\mu))}F(\delta(\lambda))=\sum_{l=1}^m (1-\overline{\delta_l(\mu)}\delta_l(\lambda))A_l(\delta(\lambda),\delta(\mu)).$$
As the functions, $a_l(\lambda,\mu)= A_l(\delta(\lambda),\delta(\mu))$, are positive definite on $E$, this proves via Definition \ref{def3.20} that $1-\overline{\ztof(\mu)}\ztof(\lambda) \in \cdel(E)$.
\end{proof}

\section{Utility}
\label{secd}
A fundamental fact, which allows for the purely operator theoretic construction in Section 2 to have significance for several complex variables, is that the elements of $\hinfdelgen(\gdel)$ are holomorphic functions on $\gdel$.
We adopt the abbreviated notation, $\hinfdelgen$, for $\hinfdelgen(\gdel)$.
\begin{prop}\label{prop4.20}
$\hinfdelgen$ equipped with $\normdelgen{\cdot}$ is a Banach algebra. Furthermore, if $H^\infty(G_\delta)$ denotes the space of bounded holomorphic functions on $G_\delta$ equipped with the sup norm, ${\norm{\cdot}}_\infty$, then $\hinfdelgen \subseteq H^\infty(G_\delta)$ and ${\norm{\phi}}_\infty \le \normdelgen{\phi}$ for all $\phi \in \hinfdelgen$.
\end{prop}
\begin{proof}
That $\hinfdelgen$ is a Banach algebra, $\hinfdelgen \subseteq L^\infty(\gdel)$, and $\norminf{\phi} \le \normdelgen{\phi}$ whenever $\phi \in \hinfdelgen$ all follow from Proposition \ref{prop2.10}. That $\hinfdelgen \subseteq O(\D^m)$, follows from (a) implies (d) in Theorem \ref{thm3.10}.
\end{proof}

We now come to one of the central points of the paper. Our goal is to use the operator theoretic construction of Section 2 and the representation result Theorem \ref{thm3.10} to study holomorphic functions on domains of holomorphy. 
If this program is to be carried out, then there must be an ample supply of holomorphic functions in $\hinfdelgen$. Thus, the following definition is germane.
\begin{definition}\label{def4.22}
Let $\delta \in O(U)^m$. We say that $\delta$ is {\em utile} if $f \in \hinfdelgen$ whenever $f$ is holomorphic on a neighborhood of $K_\delta$.
\end{definition}
\subsection{An Operator-theoretic Characterization of Utility}
Theorem \ref{thm4.20} below gives a characterization for when a tuple $\delta$ is utile in terms of the following natural operator theoretic notions.
\begin{definition}\label{def4.30}
Let us agree to say that $\delta$ is {\em bounding} if $\fdelgen$ is bounded, i.e., for each $r=1,\ldots,m$,
$$\sup_{T \in \fdelgen} \norm{T^r} < \infty.$$
\end{definition}
Note that if $\delta$ is bounding and $\{T_n\}$ is a sequence in $\fdelgen$, then $\oplus_{n=1}^\infty T_n$ is a well defined bounded operator.
\begin{definition}\label{def4.40}
We say that $\delta$ is {\em spectrally determining} if $\delta$ is bounding and if $\sigma(\oplus_{n=1}^\infty T_n) \subset \kdel$ for every sequence $\{T_n\}$ in $\fdelgen$.
\end{definition}
As we have adopted the strategy of studying holomorphic functions defined on a neighborhood of $K_\delta$, the following technical modification of the notions of bounding and spectrally determining will prove useful.
\begin{definition}\label{def4.50}

We say that $\delta$ is {\em strictly bounding (resp. strictly spectrally determining)} if there exists $t<1$ such that $t\delta$ is bounding (resp. spectrally determining.
\end{definition}
Note that if $t<1$, then $\fdelgen \subset \ftdelgen$ so that if $\delta$ is strictly bounding, then $\delta$ is bounding. Likewise, if $\delta$ is strictly spectrally determining, then $\delta$ is spectrally determining. To prove this, assume $t\delta$ is spectrally determining and let $\{T_n\}$ be a sequence in $\fdelgen$. As $\fdelgen \subset \ftdelgen$, $T=\oplus_n T_n$ is a well defined operator with $\sigma(T) \subseteq K_{t\delta} \subset U$. Hence, the operators, $\delta_l(T)$, $l=1,\ldots,m$ make sense by the Taylor functional calculus. Furthermore, as $T_n \in \fdelgen$, for each $l=1\ldots,m$,
$$\norm{\delta_l(T)} = \norm{\delta_l(\oplus_n T_n)} = \norm{\oplus_n\delta_l(T_n)} \le 1.$$
Hence, by the spectral mapping theorem, $\sigma(T) \subseteq K_\delta$, as was to be shown.

We used the fact that $\delta_l(\oplus_n T_n) = \oplus_n\delta_l(T_n)$. This follows 
from Lemma~\ref{lemds}.

\bl
\label{lemds} Let $\h_n$ be a sequence of Hilbert spaces, and 
let $T_n$ be in $\clhnd$.  Assume that $T := \oplus_n T_n$ is bounded, and its spectrum lies
inside $U$. Let $g \in O(U)$. Then $g( \oplus_n T_n) = \oplus_n g(T_n)$.
\el
\bp
This result follows from Vasilescu's construction of the Taylor functional calculus
as an integral of an operator-valued Martinelli kernel
(see \cite[Section III.11]{vas82} or \cite{cur88}). 
The integrand for $T$ is the direct sum of the integrands for each $T_n$.
\ep


The authors have been unable to resolve fully the following issue.
\begin{question}\label{ques4.10}
Does bounding imply spectrally determining? 
\end{question}

However, we show in Theorem~\ref{thm4.20} that utile and spectrally determining are equivalent, and in Theorem~\ref{thm6.10} that strictly bounding implies utile.

In some cases  Question \ref{ques4.10} can be answered.
\begin{prop}\label{prop4.30}
Let $K_\delta$ be a p-polyhedron (i.e. $U = \C^d$, and  $\delta_1,\ldots,\delta_m$ are polynomials).
 If $\delta$ is bounding, then $\delta$ is spectrally determining.
\end{prop}
\begin{proof}
Fix a sequence $\{T_n\}$ in $\fdelgen$. As $\delta$ is bounding, $T=\oplus_{n=1}^\infty T_n$ is a well defined bounded operator. Furthermore, if $p$ is a polynomial, $p(T)=\oplus_{n=1}^\infty p(T_n)$. Thus, if $1 \le l \le m$,
$$\norm{\delta_l(T)} = \norm{\oplus_{n=1}^\infty \delta_l(T_n)} \le 1,$$
so that $\sigma(\delta(T)) \subseteq (\D^-)^m$. Hence, by the spectral mapping theorem for polynomials,
$$\delta(\sigma(T)) = \sigma(\delta(T)) \subseteq (\D^-)^m.$$
Thus,
$$\sigma(T) \subseteq \delta^{-1}((\D^-)^m) = K_\delta. 
$$
\end{proof}

We now can state our characterization of utility in operator terms.
\begin{theorem}\label{thm4.20}
Let $\delta \in O(U)^m$. Then $\delta$ is utile if and only if $\delta$ is spectrally determining.
\end{theorem}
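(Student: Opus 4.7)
The plan is to prove both implications using Taylor's theorem (as recalled in the introduction) and Lemma~\ref{lemds}, with the $\Rightarrow$ direction routed through a closed-graph construction.

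For the easy direction, spectrally determining $\Rightarrow$ utile, I would argue by contradiction. Suppose $f$ is holomorphic on an open neighborhood $W$ of $K_\delta$ but $f \notin \hinfdelgen$, so that there exist $T_n \in \fdelgen$ with $\|f(T_n)\| \to \infty$. Boundedness makes $T := \bigoplus_n T_n$ a well-defined commuting bounded $d$-tuple, and the spectrally-determining hypothesis gives $\sigma(T) \subseteq K_\delta \subset W$. Hence $f(T)$ is defined by the Taylor calculus, and Lemma~\ref{lemds} gives $f(T) = \bigoplus_n f(T_n)$. This forces $\|f(T)\| = \sup_n \|f(T_n)\| = \infty$, contradicting $f(T) \in B(\h)$.

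For the converse, utile $\Rightarrow$ spectrally determining, I would first apply utility to the coordinate functions $z^r$ (which are entire, hence holomorphic on a neighborhood of $K_\delta$); this places them in $\hinfdelgen$, whence $\sup_{T \in \fdelgen}\|T^r\| < \infty$ and $\delta$ is bounding. Fix a sequence $\{T_n\} \subset \fdelgen$ and set $T := \bigoplus_n T_n$; the goal is to show $\sigma(T) \subseteq K_\delta$. For any open $W$ with $K_\delta \subseteq W \subseteq U$, I define
\[
\pi_W : O(W) \longrightarrow B(\h), \qquad \pi_W(f) := \bigoplus_n f(T_n),
\]
where each $f(T_n)$ is unambiguous via formula~(\ref{eqb2}) since $T_n$ is diagonalizable with $\sigma(T_n) \subseteq G_\delta \subseteq W$. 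Utility gives $\|\pi_W(f)\| = \sup_n \|f(T_n)\| \le \normdelgen{f} < \infty$, and $\pi_W$ is visibly an algebra homomorphism with $\pi_W(z^r) = T^r$.

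The crux is continuity of $\pi_W$, which I would deduce from the closed graph theorem: $O(W)$ is Fr\'echet and $B(\h)$ is Banach, so if $f_k \to f$ in $O(W)$ and $\pi_W(f_k) \to S$ in operator norm, then for each $n$ the sequence $f_k$ converges pointwise on the finite set $\sigma(T_n) \subset W$, forcing $f_k(T_n) \to f(T_n)$; restricting, $S|_{\h_n} = f(T_n)$ for every $n$, so $S = \pi_W(f)$ and the graph is closed. With $\pi_W$ now a continuous $O(W)$-functional calculus for $T$, Taylor's theorem yields $\sigma(T) \subseteq W$. Since $K_\delta$ is compact, intersecting over all open $W \supseteq K_\delta$ gives $\sigma(T) \subseteq K_\delta$, establishing the spectrally-determining condition. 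I expect the step requiring the most care to be the verification of the closed graph property, but it is essentially automatic once one observes that operator-norm convergence on each finite-dimensional summand $\h_n$ follows from the uniform-on-compacts topology of $O(W)$.
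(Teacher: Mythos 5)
Your first direction (spectrally determining $\Rightarrow$ utile) is fine and is essentially the paper's argument: direct sum, Taylor calculus on a neighborhood of $\kdel$, and Lemma~\ref{lemds}. The problem is in the converse, at the single sentence ``With $\pi_W$ now a continuous $O(W)$-functional calculus for $T$, Taylor's theorem yields $\sigma(T) \subseteq W$.'' That implication is false for a general open neighborhood $W$ of $\kdel$, and your argument uses it for \emph{every} such $W$ in order to intersect down to $\kdel$. Concretely, if $W\subset\C^2$ is a Hartogs figure whose envelope of holomorphy is $\D^2$, and $\lambda_0\in\D^2\setminus W$, then the scalar pair $T=(\lambda_0^1 I,\lambda_0^2 I)$ on $\h=\C$ admits the continuous unital homomorphism $O(W)\to\C$, $f\mapsto \tilde f(\lambda_0)$ (extend to the envelope, then evaluate), which is an $O(W)$-functional calculus for $T$ in the sense of the introduction, yet $\sigma(T)=\{\lambda_0\}\not\subseteq W$. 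So the ``only if'' half of the statement recalled in the introduction cannot be invoked for arbitrary open sets; what is true is the existence half (spectrum in $W$ gives a calculus), while the spectral inclusion you need requires $W$ to be holomorphically convex, and proving it is exactly the nontrivial content here, not a citation.

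This is precisely where the paper does its real work. It builds one homomorphism $\tau:O(U)\to\mathcal{L}(\h)$, $\tau(f)=\oplus_n f(T_n)$ (continuous by closed graph, as in your argument), passes to the norm-closed algebra $\A$ generated by its range, and uses $\sigma(T)\subseteq\sigma_\A(T)$ together with Gelfand theory: each character $\chi$ of $\A$ gives a continuous character $\chi\circ\tau$ of $O(U)$, which by Theorem~\ref{thm4.30} (valid because $U$ is a domain of holomorphy) is evaluation at a point $\alpha\in U$; then a separation argument using the $O(U)$-convexity of $K_{t\delta}$ and the closed-graph bound of Lemma~\ref{lem4.5} forces $\alpha\in\kdel$. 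Your scheme can be repaired along similar lines: replace arbitrary $W$ by the analytic polyhedra $G_{t\delta}$, $t\uparrow 1$, which are domains of holomorphy contained in $U$ and satisfy $\bigcap_{t<1}G_{t\delta}=\kdel$, and for each such $W$ deduce $\sigma(T)\subseteq\sigma_{\A_W}(T)\subseteq W$ by the character argument (point evaluations on a domain of holomorphy), not by appealing to ``Taylor's theorem.'' Without that Gelfand-theoretic step, or the paper's separation argument, the converse direction has a genuine gap.
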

The proof of Theorem \ref{thm4.20} will be presented in Subsection~\ref{subsecut}. 
The proof of sufficiency will be a straightforward operator-theoretic proof that relies on the Taylor functional calculus while the proof of necessity will rely on some nontrivial function theory which we discuss in the next subsection.

We record the following simple corollary of Proposition \ref{prop4.30} and Theorem \ref{thm4.20} for future reference.
\begin{prop}\label{prop4.40}
Let $K_\delta$ be a p-polyhedron.
If $\delta$ is bounding, then $\delta$ is utile.
\end{prop}
\subsection{Some Function Theory on Domains of Holomorphy}
In this section we record some results from  function theory on domains of holomorphy that
will be used in the sequel. Recall that $O(U)$ is naturally a Fr\'echet space when endowed with the topology of uniform convergence on compact subsets of $U$. Also recall that if $\mathcal{A}$ is an algebra, then a \emph{complex homomorphism} of $\mathcal{A}$ is a homomorphism defined on $\mathcal{A}$ whose target algebra is $\C$.
\begin{theorem}\label{thm4.30}
$\chi$ is a continuous complex homomorphism of $O(U)$ if and only if there exists $\alpha \in U$ such that $\chi(f) = f(\alpha)$ for all $f \in O(U)$.
\end{theorem}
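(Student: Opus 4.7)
The ``if'' direction is immediate: for any $\alpha \in U$ the evaluation map $f \mapsto f(\alpha)$ is a complex homomorphism of $O(U)$, and it is continuous for the Fr\'echet topology of uniform convergence on compacta, since any compact neighborhood of $\alpha$ in $U$ supplies a seminorm dominating $|f(\alpha)|$.

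For the converse my plan is as follows. Given a continuous complex homomorphism $\chi : O(U) \to \C$, I define $\alpha = (\alpha^1, \dots, \alpha^d) \in \C^d$ by $\alpha^r = \chi(z^r)$, and then establish in two steps that (a) $\alpha \in U$, and (b) $\chi(f) = f(\alpha)$ for every $f \in O(U)$. Both steps will invoke the hypothesis that $U$ is a domain of holomorphy, and hence Stein, through two standard consequences of Cartan's Theorem~B: \emph{(B1, corona)} if $f_1, \dots, f_n \in O(U)$ have no common zero in $U$, then there exist $g_1, \dots, g_n \in O(U)$ with $\sum_{i=1}^n f_i g_i \equiv 1$ on $U$; and \emph{(B2, local Nullstellensatz at a point)} if $\alpha \in U$ and $f \in O(U)$ vanishes at $\alpha$, then $f = \sum_{r=1}^d (z^r - \alpha^r)\, h_r$ for some $h_1, \dots, h_d \in O(U)$.

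For step (a) I argue by contradiction. If $\alpha \notin U$ then the $d$ functions $z^r - \alpha^r \in O(U)$ have no common zero in $U$, so (B1) produces $g_r \in O(U)$ with $\sum_{r=1}^d (z^r - \alpha^r)\, g_r \equiv 1$ on $U$. Applying the homomorphism $\chi$ to both sides gives $\sum_{r=1}^d (\chi(z^r) - \alpha^r)\,\chi(g_r) = 1$, i.e. $0 = 1$, a contradiction. Hence $\alpha \in U$. For step (b), fix an arbitrary $f \in O(U)$; applying (B2) to $f - f(\alpha)$, which vanishes at $\alpha \in U$, produces $h_r \in O(U)$ with $f(z) - f(\alpha) = \sum_{r=1}^d (z^r - \alpha^r)\, h_r(z)$ on $U$. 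Applying $\chi$ to both sides, and using $\chi(z^r) - \alpha^r = 0$, yields $\chi(f) - f(\alpha) = 0$, as required.

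The only nontrivial input is Cartan's Theorem~B and its corollaries (B1) and (B2), which are precisely the tools made available by the domain-of-holomorphy hypothesis; I expect this to be the main ``obstacle'' only in the sense that one must cite (or prove) the right complex-analytic machinery. Notably, continuity of $\chi$ is never actually used in the argument --- it is present in the hypothesis purely as the natural topological setting for characters of the Fr\'echet algebra $O(U)$.
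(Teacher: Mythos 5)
Your argument is correct, and it is a genuine proof rather than the paper's treatment, which consists solely of a citation to Range \cite{ran86}*{Thm. VII.4.1}. The ``if'' direction and the reduction to the two facts (B1) and (B2) are exactly right, and both facts are standard consequences of Cartan's Theorem B on a Stein domain: for (B2) one notes that the ideal sheaf generated by $z^1-\alpha^1,\dots,z^d-\alpha^d$ is the full structure sheaf away from $\alpha$ and the maximal ideal at $\alpha$, so $f-f(\alpha)$ is a global section of it, and vanishing of $H^1$ of the (coherent) relation sheaf gives the global decomposition. Two points of comparison with the paper are worth noting. First, your (B1) is literally the paper's own Theorem~\ref{thm4.40} (the ``baby Corona Theorem''), which the paper proves by a more elementary route (the maximal ideal space of $A(U^-)$ plus a normal-families argument); so you could invoke that result for step (a) and reserve Theorem B only for the pointwise decomposition in step (b). Second, your closing observation is accurate: the argument is purely algebraic once (B1) and (B2) are available, so continuity of $\chi$ is never used and you in fact prove a slightly stronger statement than the one quoted --- with the usual caveat, which you use implicitly when applying $\chi$ to both sides, that ``complex homomorphism'' must be understood as nonzero (equivalently unital, $\chi(1)=1$), since the zero map satisfies the hypotheses as literally stated but is not an evaluation.
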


This follows from \cite[Thm. VII.4.1]{ran86}. 
Another basic fact which we shall require is the following ``baby'' Corona Theorem. The result is well-known, though we cannot find a reference where it is explicitly stated. It can be proved by the
observation that if $\alpha$ is not in $U^-$, then the functions $ R_\alpha^r(\lambda)$ can be chosen to be continuous, because the maximal ideal space of $A(U^-)$ is $U^-$ (see \eg \cite{ros61}). If $\alpha \in \partial U$, take a sequence of points $\alpha_n$ tending to $\alpha$
from outside $U^-$ and use a normal families argument.
\begin{theorem}\label{thm4.40}
If $\alpha \in \C^d \setminus U$, then there exist $R_\alpha^1,\ldots,R_\alpha^d \in O(U)$ such that
$$1 \= \sum_{r=1}^d R_\alpha^r(\lambda) (\lambda^r - \alpha^r)$$
for all $\lambda \in U$.
\end{theorem}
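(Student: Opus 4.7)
The plan is to invoke Cartan's Theorem B, which is available since $U$, being a domain of holomorphy, is a Stein manifold. Let $f_r(\lambda):=\lambda^r-\alpha^r$ for $r=1,\dots,d$; because $\alpha \notin U$, these functions have no common zero on $U$, so at each $\lambda_0 \in U$ some $f_r$ is a unit in the stalk $\mathcal O_{U,\lambda_0}$. Consequently the Koszul complex of $(f_1,\dots,f_d)$ on $U$,
\[
0 \to \Lambda^d \mathcal O_U^{\,d} \to \Lambda^{d-1}\mathcal O_U^{\,d} \to \cdots \to \mathcal O_U^{\,d} \xrightarrow{\,d_1\,} \mathcal O_U \to 0,
\]
with $d_1(g_1,\dots,g_d) := \sum_{r=1}^d g_r f_r$, is exact as a sequence of coherent analytic sheaves: whenever a Koszul complex has a unit among its generators the complex is contractible, so exactness holds at every stalk.

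Breaking this into short exact sequences $0 \to K_i \to \Lambda^i \mathcal O_U^{\,d} \to K_{i-1} \to 0$ (with $K_i$ the kernel of the preceding differential) and running the associated long exact cohomology sequences, Cartan's Theorem B (the vanishing $H^q(U,\mathcal F)=0$ for every coherent analytic sheaf $\mathcal F$ on $U$ and every $q\ge 1$) shows inductively that the global section functor preserves exactness here. In particular, the global map $d_1\colon O(U)^d \to O(U)$ is surjective, so $1$ lies in its image, which is exactly the claim: there exist $R^1_\alpha,\dots,R^d_\alpha \in O(U)$ with $\sum_r R^r_\alpha(\lambda)(\lambda^r-\alpha^r)=1$ on $U$.

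The main obstacle is simply the appeal to Theorem B, which is a non-elementary piece of Stein theory. An alternative that stays closer to the hint given in the excerpt is to split into cases: first handle $\alpha \notin \overline U$ via the Gelfand-theoretic observation that the maximal ideal space of $A(\overline U)$ is $\overline U$ by \cite{ros61} (so that elements of $A(\overline U)$ without a common zero on $\overline U$ cannot all lie in a single maximal ideal and therefore generate the unit ideal), then handle $\alpha \in \partial U$ by picking $\alpha_n \notin \overline U$ with $\alpha_n \to \alpha$, applying the first case to obtain $R^r_{\alpha_n}$, and extracting a limit. The genuine difficulty in that alternative is securing local uniform boundedness of $\{R^r_{\alpha_n}\}$ on compact subsets of $U$ for the Montel-based limit step, since the Gelfand construction is non-constructive; one would need to produce the coefficients via an explicit integral formula (Bochner--Martinelli or Cauchy--Fantappiè--Leray) depending continuously on the parameter $\alpha_n$. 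For cleanliness I would carry out the sheaf-cohomological plan above.
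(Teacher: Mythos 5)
Your argument is correct, but it is not the route the paper takes. The paper's own proof is a two-case function-algebra argument: for $\alpha\notin U^-$ it invokes the fact that the maximal ideal space of $A(U^-)$ is $U^-$ (citing Rossi), so the functions $\lambda^r-\alpha^r$, having no common zero on $U^-$, generate the unit ideal with coefficients continuous up to the boundary; for $\alpha\in\partial U$ it takes $\alpha_n\to\alpha$ from outside $U^-$ and passes to a limit by normal families. Your main plan instead runs the Koszul complex of $(\lambda^1-\alpha^1,\dots,\lambda^d-\alpha^d)$, which is stalkwise exact since some generator is a unit at every point of $U$, and then uses Cartan's Theorem B on the Stein manifold $U$ to get surjectivity of $(g_1,\dots,g_d)\mapsto\sum_r g_r(\lambda^r-\alpha^r)$ on global sections. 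That is a complete and standard proof; note you do not even need exactness of the full complex --- the single short exact sequence $0\to\ker d_1\to\mathcal{O}_U^{\,d}\to\mathcal{O}_U\to 0$, coherence of $\ker d_1$, and $H^1(U,\ker d_1)=0$ already suffice. What the two approaches buy: yours treats $\alpha\in\partial U$ and $\alpha\notin U^-$ uniformly and sidesteps exactly the issue you flag in the alternative (locally uniform bounds on $R^r_{\alpha_n}$ needed for the Montel step, which the paper's sketch also leaves implicit), at the price of invoking Theorem B, a heavier piece of Stein theory than the paper's professedly elementary spirit; the paper's route stays within uniform-algebra language but leans on the cited identification of the maximal ideal space of $A(U^-)$ and leaves the normal-families bookkeeping to the reader. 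Since the paper elsewhere already uses the Taylor functional calculus and the Cartan extension theorem, your appeal to Theorem B is not out of proportion, and your write-up stands on its own.
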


\subsection{The Proof of the Utility Theorem}
\label{subsecut}

This subsection is devoted to a proof of Theorem \ref{thm4.20}. First assume that $\delta$ is spectrally determining. Fix $f$, holomorphic on a neighborhood of $K_\delta$. We need to show that $f \in \hinfdelgen$.
To that end, fix a sequence, $\{T_n\}$ in $\fdelgen$. As $\{T_n\}$ in $\fdelgen$ and $\delta$ is spectrally determining, if we set $T=\oplus_n T_n$, then 
$T$ is a well defined bounded operator with $\sigma(T) \subseteq K_{\delta}$. Hence, as $f$ is holomorphic on a neighborhood of $K_{\delta}$, $f(T)$, as defined by the Taylor functional calculus, is a well defined bounded operator. 
But $f(T) = \oplus_n f(T_n)$ by Lemma~\ref{lemds}, so
  $\norm{f(T_n)} \le \norm{f(T)}$ for all $n$. Summarizing, we have shown that if $\{T_n\}$ is a sequence in $\fdelgen$, then
$\sup_n \norm{f(T_n)} < \infty$. Thus, $\normdelgen{f} < \infty$ and $f \in \hinfdelgen$.

Now assume that $\delta$ is utile. We first show that $\delta$ is bounding.
As $f(\lambda) = \lambda^r$ is holomorphic on a neighborhood of $K_\delta$ and $\delta$ is utile, $f \in \hinfdelgen$. Hence,
$$\sup_{T \in \fdelgen}\norm{T^r} = \sup_{T \in \fdelgen}\norm{f(T)} = \normdelgen{f} < \infty.$$
This proves that $\delta$ is bounding.

To complete the proof that $\delta$ is spectrally determining, fix a sequence, $\{T_n\}$ in $\fdelgen$. As $\delta$ is bounding, $T=\oplus_n T_n$ is a well defined bounded operator. For definiteness, we assume $T$ is in $\clhd$.
We need to show that $\sigma(T) \subseteq K_\delta$.

Since $\delta$ is utile, $O(U) \subset \hinfdelgen$. It follows that the formula,
$$\tau(f) = \oplus_n f(T_n),\ \ \ f \in O(U)$$
defines an algebra homomorphism, $\tau:O(U) \to \mathcal{L}(\mathcal{H})$, with 
the property that $\tau(1)=1$,
 and which the closed graph theorem implies is continuous. Let $\A$ be the operator norm closed algebra generated by ${\rm ran\ } \tau$. 
Evidently, $\A$ is a commutative Banach algebra with unit,
 containing the components of $T$. We let $\sigma_\A(T)$ denote the algebraic spectrum of $T$ in this algebra $\A$. 
It is well known that $\sigma(T) \subseteq \sigma_\A(T)$ (see, {\em e.g.}, \cite{cur88}). 
Also, by Gelfand Theory, if $X$ denotes the space of complex homomorphisms of $\A$, then $\sigma_{\A}(T) = \set{\chi(T)}{\chi \in X}$. Thus, the proof of Theorem \ref{thm4.20} will be complete if we can prove the following claim.
\begin{claim}\label{claim4.10}
If $\chi \in X$, then $\chi(T) \in K_\delta$.
\end{claim}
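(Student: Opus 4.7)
The plan is to combine the utility-driven existence of the homomorphism $\tau : O(U) \to \mathcal{L}(\h)$ with Theorem~\ref{thm4.30} so as to represent $\chi$ as evaluation at a point $\alpha$ of $U$, and then to use the Banach-algebra bound $|\chi(x)| \le \|x\|$ to force that $\alpha$ into $K_\delta$.

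First I would form the composition $\sigma := \chi \circ \tau : O(U) \to \C$. Since $\tau$ and $\chi$ are both unital algebra homomorphisms, so is $\sigma$, and $\sigma(1) = 1$, so $\sigma$ is nonzero. For continuity, $\tau$ is already known to be continuous for the Fr\'echet topology of $O(U)$ (this is the closed-graph step invoked in the excerpt just before the claim), and every complex homomorphism of a unital commutative Banach algebra is automatically contractive, so $\chi$ is continuous on $\mathcal{A}$. Hence $\sigma$ is a continuous complex homomorphism of $O(U)$, and Theorem~\ref{thm4.30} produces a point $\alpha \in U$ with $\sigma(f) = f(\alpha)$ for all $f \in O(U)$.

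Now I would read off both that $\chi(T) = \alpha$ and that $\alpha \in K_\delta$. Applying $\sigma$ to the coordinate function $\lambda^r$ gives $\chi(T^r) = \chi(\tau(\lambda^r)) = \alpha^r$, so $\chi(T) = \alpha$. To show $\alpha \in K_\delta$ it remains to check that $|\delta_l(\alpha)| \le 1$ for each $l = 1,\ldots,m$. Directly from the definition of $\tau$, $\tau(\delta_l) = \oplus_n \delta_l(T_n)$, and since each $T_n \in \fdelgen$ has $\|\delta_l(T_n)\| \le 1$, we have $\|\tau(\delta_l)\| \le 1$. Contractivity of $\chi$ then yields
\[
|\delta_l(\alpha)| \= |\chi(\tau(\delta_l))| \ \le \ \|\tau(\delta_l)\| \ \le\ 1,
\]
so $\alpha \in K_\delta$, which proves Claim~\ref{claim4.10}.

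The argument is essentially a bookkeeping one once utility has supplied $\tau$; the only place that deserves explicit care is verifying that $\sigma = \chi \circ \tau$ is continuous in the Fr\'echet topology on $O(U)$, which is why I would record both the closed-graph continuity of $\tau$ (a direct consequence of $O(U) \subset \hinfdelgen$, i.e., of utility) and the automatic contractivity of $\chi$ on $\mathcal{A}$. With continuity in hand, Theorem~\ref{thm4.30} is what does the real work, turning a spectral point of $T$ relative to $\mathcal{A}$ back into an actual point of $U$ whose image under $\delta$ one can estimate via the norm bound on $\tau(\delta_l)$.
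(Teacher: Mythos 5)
Your proof is correct, and it takes a genuinely different route from the paper's at the decisive step. Both arguments begin identically: utility gives the homomorphism $\tau$, the closed graph theorem gives its continuity, automatic contractivity of $\chi$ on the Banach algebra $\A$ gives continuity of $\chi\circ\tau$, and Theorem~\ref{thm4.30} identifies $\chi\circ\tau$ with evaluation at a point $\alpha\in U$, so that $\chi(T)=\alpha\in U$. Where you diverge is in showing $\alpha\in K_\delta$: the paper argues by contradiction, choosing $t<1$ with $\alpha\notin K_{t\delta}$, invoking the $O(U)$-convexity of $K_{t\delta}$ to produce functions $f_k$ bounded by $1$ on $K_{t\delta}$ with $|f_k(\alpha)|\to\infty$, and then using Lemma~\ref{lem4.5} (a second closed-graph estimate, $\normdelgen{f}\le c\sup_\Omega|f|$) to bound $\|\tau(f_k)\|$ and reach a contradiction. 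You instead evaluate $\delta_l$ itself through the homomorphism: since each $T_n\in\fdelgen$ satisfies $\|\delta_l(T_n)\|\le 1$ by the very definition \eqref{eq2.20}, you get $\|\tau(\delta_l)\|\le 1$, and contractivity of $\chi$ gives $|\delta_l(\alpha)|=|\chi(\tau(\delta_l))|\le 1$ for every $l$, hence $\alpha\in K_\delta$ directly. Your argument is more elementary --- it bypasses Lemma~\ref{lem4.5}, the holomorphic convexity of the enlarged polyhedron $K_{t\delta}$, and the separation sequence entirely, exploiting the fact that the defining inequalities of $\fdelgen$ are stated precisely in terms of $\delta_l$. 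What the paper's convexity/separation argument buys is a template that does not depend on the defining functions being available inside the algebra with an explicit norm bound (it only needs utility-type estimates on functions bounded near $K_\delta$), but for the claim as stated your direct computation suffices and is cleaner.
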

To prove this claim we shall require the following simple lemma.
\begin{lemma}\label{lem4.5}
If $\delta$ is utile and $\Omega$ is a neighborhood of $K_\delta$, then there exists a positive constant, $c$, such that
$$\normdelgen{f} \le c\sup_{\lambda \in \Omega}|f(\lambda)|$$
for all $f \in H^\infty(\Omega)$
\end{lemma}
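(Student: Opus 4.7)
The proof will be a direct application of the closed graph theorem, using the fact that utility gives us, for free, an inclusion map from $H^\infty(\Omega)$ into $\hinfdelgen$.

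My plan is to consider the linear map
\[
i : H^\infty(\Omega) \longrightarrow \hinfdelgen, \qquad i(f) = f\big|_{G_\delta}.
\]
First I would check that $i$ is well defined. Since $G_\delta \subseteq K_\delta \subseteq \Omega$, the restriction $f|_{G_\delta}$ makes sense for every $f \in H^\infty(\Omega)$. Moreover, $f$ is holomorphic on the neighborhood $\Omega$ of $K_\delta$, so by the hypothesis that $\delta$ is utile (Definition~\ref{def4.22}), $f|_{G_\delta} \in \hinfdelgen$.

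Both $H^\infty(\Omega)$ (equipped with $\norminf{\cdot}$) and $\hinfdelgen$ (equipped with $\normdelgen{\cdot}$) are Banach spaces: the former is standard, and the latter is Proposition~\ref{prop4.20}. So the closed graph theorem is applicable, and it remains only to verify that $i$ has closed graph. Suppose $f_n \to f$ in $H^\infty(\Omega)$ and $i(f_n) \to g$ in $\hinfdelgen$. On the one hand, uniform convergence on $\Omega$ implies $f_n(\lambda) \to f(\lambda)$ for each $\lambda \in G_\delta$. On the other hand, by Proposition~\ref{prop4.20}, $\norminf{\cdot} \le \normdelgen{\cdot}$ on $\hinfdelgen$, so $i(f_n) \to g$ uniformly on $G_\delta$, and in particular $f_n(\lambda) \to g(\lambda)$ for each $\lambda \in G_\delta$. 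Therefore $g = f|_{G_\delta} = i(f)$, so the graph is closed.

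The closed graph theorem now yields a constant $c > 0$ such that $\normdelgen{i(f)} \le c \norminf{f}$ for every $f \in H^\infty(\Omega)$, which is the desired inequality. I do not anticipate a serious obstacle here: the entire argument is a soft application of the closed graph theorem, and the only point to take mild care with is the verification that pointwise limits in the two norms agree on $G_\delta$, which follows immediately from the domination $\norminf{\cdot} \le \normdelgen{\cdot}$ provided by Proposition~\ref{prop4.20}.
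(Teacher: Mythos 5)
Your proposal is correct and is essentially the paper's own argument: the paper also defines the restriction map $L(f)=f|_{G_\delta}$, notes utility makes it land in $\hinfdelgen$, and invokes the Closed Graph Theorem. You have simply written out the closed-graph verification (via pointwise convergence and the bound $\norminf{\cdot}\le\normdelgen{\cdot}$) that the paper leaves implicit.
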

\begin{proof}
The utility of $\delta$ guarantees that the formula $L(f)=f|\gdel$ defines a linear transformation $L:H^\infty(\Omega) \to \hinfdelgen$. The Closed Graph Theorem implies that $L$ is bounded.
\end{proof}
We now turn to the proof of Claim \ref{claim4.10}. Let $\chi \in X$ and set $\alpha=\chi(T)$. As $\chi \circ \tau$ is a continuous complex homomorphism of $O(U)$, it follows from Theorem \ref{thm4.30} that
$$\alpha = \chi (T) = \chi(\tau(\lambda)) = (\chi \circ \tau)(\lambda) \in U.$$
To see that $\alpha$ is in $K_\delta$, we argue by contradiction. If $\alpha \not\in K_\delta$, then there exists $t < 1$ such that $K_\delta \subset G_{t\delta}$ and $\alpha \not\in K_{t\delta}$. As $K_{t\delta}$ is $O(U)$-convex and $\alpha \in U \setminus K_{t\delta}$, there exists a sequence $\{f_k\}$ in $O(U)$ such that
\be\label{eq4.10}
\sup_{\lambda \in K_{t\delta}}|f_k(\lambda)| \le 1
\ee
and
\be\label{eq4.20h}
|f_k(\alpha)| \to \infty.
\ee
Evidently, \eqref{eq4.10} implies via an application of Lemma \ref{lem4.5} that there exists a constant $c$ such that $\normdelgen{f_k} \le c$. But then,
$$|f_k(\alpha)| =|\chi(\tau(f_k))| \le \norm{\tau(f_k)}\le \normdelgen{f_k} \le c$$
for all $k$, contradicting \eqref{eq4.20h}. This completes the proof of Claim \ref{claim4.10}.

\subsection{Strictly bounding implies utile}
\begin{theorem}\label{thm6.10}
If $\delta$ is strictly bounding and $f$ is holomorphic on a neighborhood of $K_\delta  \cap V$, then there exists $F \in \hinfdelgen$ such that $F = f$ on $\gdel \cap V$.
\end{theorem}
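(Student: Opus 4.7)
The strategy is to produce a bounded extension $F$ on a slightly enlarged polyhedron and verify $F \in \hinfdelgen$ by applying Taylor's functional calculus to direct sums of operators in $\fdelgen$, then transferring norm bounds via Lemma~\ref{lemds}.

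Using strict bounding, choose $t \in (0,1)$ with $t\delta$ bounding. The compacta $K_{s\delta}$ decrease to $\kdel$ as $s \to 1^{-}$, and $f$ is holomorphic on an open neighborhood $\Omega$ of $\kdel \cap V$, so one can pick $s \in (t,1)$ close enough to $1$ that an open neighborhood of $K_{s\delta} \cap V$ lies in $\Omega$; define $F$ as a function holomorphic on an open neighborhood of $K_{s\delta}$ with $F = f$ on $\gdel \cap V$ (for $V$ open this is automatic by taking $F = f$ on the neighborhood; if $V$ carries variety structure, an additional local extension step is inserted here).

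For any $\{T_n\} \subset \fdelgen$, set $T = \oplus_n T_n$. Since $\fdelgen \subseteq \ftdelgen$ and $t\delta$ is bounding, $T$ is a bounded commuting $d$-tuple. The crux --- and the main obstacle --- is to show $\sigma(T) \subseteq K_{s\delta}$, so that $F(T)$ is defined via Taylor's functional calculus. Fix $\mu \notin K_{s\delta}$. If $\mu \notin U$, Theorem~\ref{thm4.40} produces $R^{r}_{\mu} \in O(U)$ with $\sum_r R^{r}_{\mu}(\lambda)(\lambda^{r} - \mu^{r}) \equiv 1$ on $U$; Lemma~\ref{lemds} gives $R^{r}_{\mu}(T) = \oplus_n R^{r}_{\mu}(T_n)$, hence $\sum_r R^{r}_{\mu}(T)(T^{r} - \mu^{r}) = I$, a Koszul certificate showing $\mu \notin \sigma(T)$. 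If $\mu \in U \setminus K_{s\delta}$, then $|\delta_l(\mu)| > 1/s > 1$ for some $l$; the uniform bound $\|\delta_l(T_n)\| \leq 1$ yields $\|(\delta_l(T_n) - \delta_l(\mu))^{-1}\| \leq (|\delta_l(\mu)| - 1)^{-1}$, so $\delta_l(T) - \delta_l(\mu)$ is invertible on the direct sum. A local factorization $\delta_l(\lambda) - \delta_l(\mu) = \sum_r g_r(\lambda)(\lambda^{r} - \mu^{r})$ with $g_r$ holomorphic near $\mu$ (available because $U$ is a domain of holomorphy) converts this into a Koszul certificate at $\mu$ via the holomorphic functional calculus on a small neighborhood, giving $\mu \notin \sigma(T)$.

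Once $\sigma(T) \subseteq K_{s\delta}$ is in hand, Taylor's functional calculus produces a bounded operator $F(T)$, and Lemma~\ref{lemds} applied to $F$ gives the splitting $F(T) = \oplus_n F(T_n)$. Consequently $\sup_n \|F(T_n)\| \leq \|F(T)\| < \infty$, and since this holds for every sequence in $\fdelgen$, one has $\normdelgen{F} < \infty$, i.e., $F \in \hinfdelgen$. By construction $F = f$ on $\gdel \cap V$. The principal difficulty is the spectral containment step, which is precisely where strict bounding is decisive: the strict gap $t < s < 1$ is what allows uniform resolvent control on the family $\{T_n\}$, and the Stein structure of $U$ is what then translates these scalar resolvent bounds into Taylor spectral exactness of the $d$-tuple $T$.
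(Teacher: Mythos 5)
Your reduction of the theorem to the spectral containment $\sigma(\oplus_n T_n)\subseteq K_{s\delta}$ for an arbitrary sequence $\{T_n\}$ in $\fdelgen$ is exactly where the argument breaks down, and the failure is a circularity, not a fixable detail. In the case $\mu\notin U$ you invoke Lemma~\ref{lemds} to write $R^r_\mu(T)=\oplus_n R^r_\mu(T_n)$, but that lemma has as a hypothesis that $\sigma(T)\subset U$, which is what you are in the middle of proving; the only non-circular reading is to \emph{define} $R^r_\mu(T)$ as the direct sum, and then you need $\sup_n\norm{R^r_\mu(T_n)}<\infty$. Strict bounding gives uniform bounds only on the components $T_n^r$ (and membership in $\fdelgen$ bounds $\delta_l(T_n)$); it gives no bound on $g(T_n)$ for a general $g\in O(U)$ --- indeed, the assertion that every function holomorphic near $\kdel$ is uniformly bounded on $\fdelgen$ is precisely the utility statement being proved, and whether bounding alone forces this kind of spectral containment is the paper's open Question~\ref{ques4.10}. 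The same circularity reappears in the case $\mu\in U\setminus K_{s\delta}$: the invertibility of $\oplus_n(\delta_l(T_n)-\delta_l(\mu))$ is fine, but to turn it into a Koszul certificate you need operators $g_r(T)$ in the commutant with $\delta_l(\lambda)-\delta_l(\mu)=\sum_r g_r(\lambda)(\lambda^r-\mu^r)$; your $g_r$ are holomorphic only near $\mu$, while $\sigma(T)$ is unknown and certainly not confined to a neighborhood of $\mu$, and a global (Hefer-type) factorization on $U$ runs into the same undefined or unbounded $g_r(T)$ as before. In the paper, resolvent identities of this kind are made usable only by adjoining the normalized functions $R^r_\alpha/M^r_\alpha$ to the tuple (Theorem~\ref{lem4.10}), which forces the needed uniform bounds by construction --- but that shrinks the operator family and only yields membership in the larger algebra $\hinfgamgen$, not in $\hinfdelgen$ as Theorem~\ref{thm6.10} demands.

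The paper's own proof takes a different, function-theoretic route that avoids all spectral analysis of infinite direct sums: since $t\delta$ is bounding, each coordinate function $\lambda^r$ lies in $\hinftdelgen$, so Theorem~\ref{thm3.10} ((a)$\Rightarrow$(d)) yields $\Phi^r\in\hinfm$ with $\lambda^r=\Phi^r(t\delta(\lambda))$; this holomorphic left inverse shows that $\delta$ embeds $\gtdel$ as an analytic subvariety $\delta(\gtdel)$ of $t^{-1}\mathbb{D}^m$, the function $f$ transfers to a holomorphic function on that variety, Cartan's extension theorem extends it to $F\in O(t^{-1}\mathbb{D}^m)$, and then $F\in\hinfm$ (Proposition~\ref{prop4.30} with Theorem~\ref{thm4.20}) and (d)$\Rightarrow$(a) of Theorem~\ref{thm3.10} give $f\in\hinfdelgen$. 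To keep your operator-theoretic strategy you would first have to resolve (a strict form of) Question~\ref{ques4.10}, which the authors explicitly leave open.
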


\bp Suppose that $\delta$ is strictly bounding and $f$ is holomorphic on a neighborhood of $K_\delta$. Choose $t<1$ so that $\ftdelgen$ is bounded, but sufficiently large so that in addition, $f$ is holomorphic on $\gtdel$.

We first construct a mapping from $t^{-1}\D^m$ into $\C^d$ that serves as a left inverse for $\delta$. Fix $r$. As $\normtdelgen{\lambda^r}  < \infty$, an application of the equivalence of (a) and (d) in Theorem \ref{thm3.10} (with $E=\gtdel$ and $z=\lambda^r$) yields $\Phi^r \in \hinfm$ with
\be\label{eq4.10n}
\lambda^r = \Phi^r(t\delta(\lambda)), \ \ \lambda \in \gdel.
\ee
Define $\Psi^r \in O(t^{-1}\D^m)$ by setting $\Psi^r(z)=\Phi^r(tz)$ for $z \in t^{-1}\D^m$. Finally, amalgamate the functions $\Psi^r$, $r=1,\ldots,d$ into a mapping, $\Psi:t^{-1}\D^m \to \C^d$.

Now, $\delta:\gtdel \to t^{-1}\D^m$ and $\Psi:t^{-1}\D^m \to \C^d$. Furthermore, \eqref{eq4.10n} implies that $\Psi \circ \delta = {\text{id}}_{\gtdel}$. Hence, $\delta$ is 1-1, proper, and unramified. As a consequence, 
not only does $\delta$ embed $\gtdel$ as an analytic submanifold of $t^{-1}\D^m$, but $V=\delta(\gtdel)$ is an analytic variety and $\omega:V \to \C$ is holomorphic on $V$ if and only if $\omega \circ \delta$ is holomorphic on $\gtdel$. As $f$ is holomorphic on $\gtdel$, it follows that,
\be\label{eq4.20}
\omega(\delta(\lambda))=f(\lambda),\ \ \lambda \in \gtdel,
\ee
defines a holomorphic function, $\omega$, on $V$. By the Cartan extension theorem, there exists $F \in O(t^{-1}\D^m)$ such that
\be\label{eq4.30}
F(z)=\omega(z),\ \ z \in V.
\ee

Comparing \eqref{eq4.20} and \eqref{eq4.30} we see that $f(\lambda)=F(\delta(\lambda))$ whenever $\lambda \in \gtdel$. As $F \in O(t^{-1}\d^m)$ implies that $F \in \hinfm$
(this follows from Proposition~\ref{prop4.30} and Theorem~\ref{thm4.20})
 it follows that Condition (d) in Theorem \ref{thm3.10} holds. Hence, applying the equivalence of (a) and (d) in that theorem a second time, we deduce that $f \in \hinfdelgen$, as was to be shown.
\ep

\section{Subordination and Utility}
\label{secsubord}
If $\delta \in O(U)^m$, we say that $\gamma \in O(U)^n$ is an \emph{extension of $\delta$} if $n \ge m$ and $\gamma_l = \delta_l$ for $1 \le l \le m$. If $\gamma$ 
is an extension of $\delta$ and in addition, $K\gamma=K_\delta$, we say that \emph{$\gamma$ is subordinate to $\delta$}. Note that if $\gamma$ is subordinate to $\delta$, then $G_\gamma=G_\delta$, 
$\fgamgen \subseteq \fdelgen$, 
$\normgamgen{\cdot} \le \normdelgen{\cdot}$ and 
$\hinfdelgen \subseteq \hinfgamgen$.

We now describe a procedure from \cite{amy12b} that is very effective for using $\hinfdelgen$ to do function theory. Suppose one is given a p-polyhedron, $K_\delta$ and a function, $f$, holomorphic on a neighborhood of $K_\delta$. As $K_\delta$ is a p-polyhedron, Proposition \ref{prop4.40} will imply that $f \in \hinfdelgen$ provided $\delta$ is bounding. While $\delta$ may not be bounding, there nevertheless exists a bounding subordinate extension $\gamma$ of $\delta$. As $K_\gamma = K_\delta$, the classical function theory has not changed, but now, as $f\in \hinfgamgen$, the structure theory from Sections 2 and 3 of the paper can be applied to $f$. A particularly simple way to construct a bounding subordinate extension $\gamma$ of $\delta$ is to set
\be\label{eq4.40}
\gamma=(\delta_1,\ldots,\delta_m,\epsilon\lambda^1,\ldots,\epsilon\lambda^d)
\ee
where $\epsilon$ is chosen sufficiently small that for each $r=1,\ldots,d$, $|\epsilon\lambda^r| \le 1$ for all $\lambda \in K_\delta$ (recall that $\delta$ is such that $\kdel$ is always compact). Note that if one assumes that $K_\delta \subset \D^m$ (as Oka did), then one can choose $\epsilon =1$.

In \cite{amy12b} the idea in the previous paragraph was used to give a new proof of Oka's Extension Theorem. We give another proof in Section~\ref{sece} below.
%
%
One novel feature of this new argument is that it is valid not only when $\gamma$ is defined by \eqref{eq4.40}, but rather, it merely requires that $\gamma$ is bounding. Another interesting feature of the argument is that it provides bounds for the extension.

When one passes from p-polyhedra to the more general case of analytic polyhedra defined in domains of holomorphy, as in light of Question \ref{ques4.10}, the situation is more complicated. To imitate the argument for p-polyhedra one now needs to know that there are extensions of $\delta$ that are spectrally determining.
\begin{theorem}\label{lem4.10}
If $\delta \in O(U)^m$, then there exist $n \ge m$ and $\gamma \in O(U)^n$ such that $\gamma$ is subordinate to $\delta$ and $\gamma$ is spectrally determining.
\end{theorem}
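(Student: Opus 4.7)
The plan is to produce $\gamma$ by the simplest possible extension of $\delta$---adjoining scaled coordinate functions---and then bootstrap spectral determinacy from strict boundedness via Theorems~\ref{thm6.10} and~\ref{thm4.20}.

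Concretely, I would set $n = m + d$ and take
\[
\gamma = (\delta_1,\dots,\delta_m,\epsilon\lambda^1,\dots,\epsilon\lambda^d) \in O(U)^n,
\]
where $\epsilon > 0$ is chosen small enough that $|\epsilon\lambda^r| \le 1$ on $K_\delta$ for every $r = 1,\dots,d$. Such an $\epsilon$ exists because $K_\delta$ is assumed compact. Then $\gamma$ is an extension of $\delta$ in the sense of Section~\ref{secsubord}. The inclusion $K_\gamma \subseteq K_\delta$ is immediate since the constraints defining $K_\gamma$ include those defining $K_\delta$; conversely, if $\lambda \in K_\delta$ then $|\delta_l(\lambda)| \le 1$ for $1 \le l \le m$ by assumption and $|\epsilon\lambda^r| \le 1$ for $1 \le r \le d$ by the choice of $\epsilon$, so $\lambda \in K_\gamma$. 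Hence $K_\gamma = K_\delta$, and $\gamma$ is subordinate to $\delta$.

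Next, I would verify that $\gamma$ is strictly bounding. Fix any $t \in (0,1)$. If $T \in {\mathcal F}_{t\gamma,\mathrm{gen}}$, then $\|(t\gamma)_l(T)\| \le 1$ for every $l$; specializing to $l = m+r$ gives $t\epsilon \|T^r\| \le 1$, so $\|T^r\| \le 1/(t\epsilon)$. Thus ${\mathcal F}_{t\gamma,\mathrm{gen}}$ is bounded, meaning $t\gamma$ is bounding, and so $\gamma$ is strictly bounding.

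Finally, applying Theorem~\ref{thm6.10} to $\gamma$ (taking $V$ to be any neighborhood of $K_\gamma = K_\delta$ on which a given $f$ is holomorphic) produces, for each such $f$, an $F \in \hinfgamgen$ with $F = f$ on $G_\gamma \cap V = G_\delta$; since elements of $\hinfgamgen = \hinfgamgen(G_\gamma)$ are functions on $G_\gamma$, this shows $f|_{G_\gamma} \in \hinfgamgen$. Hence $\gamma$ is utile in the sense of Definition~\ref{def4.22}, and Theorem~\ref{thm4.20} then yields that $\gamma$ is spectrally determining. I do not anticipate a serious obstacle: the substantive work is already contained in Theorems~\ref{thm6.10} and~\ref{thm4.20}, and the elementary extension above satisfies strict boundedness by the one-line calculation. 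The only care required is the bookkeeping between the several notions (subordinate, strictly bounding, utile, spectrally determining) and the mild point that Theorem~\ref{thm6.10} delivers utility of $\gamma$ uniformly in $f$ rather than merely an extension of a single $f$.
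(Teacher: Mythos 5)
Your argument is correct modulo results the paper establishes before this theorem, but it takes a genuinely different route. You stop at the ``easy'' extension $\gamma=(\delta,\epsilon\lambda^1,\dots,\epsilon\lambda^d)$, check it is subordinate and strictly bounding (both checks are right), and then invoke Theorem~\ref{thm6.10} (strictly bounding $\Rightarrow$ utile) followed by Theorem~\ref{thm4.20} (utile $\Leftrightarrow$ spectrally determining). The paper's proof in effect begins with your extension --- it reduces to the case where $\delta$ is bounding and $\normdelgen{\lambda^r}\le 1$ --- but then adjoins finitely many \emph{further} functions: normalized solutions $\rho_i^r=R_{\alpha_i}^r/M_{\alpha_i}^r$ of the ``baby corona'' identity of Theorem~\ref{thm4.40}, taken at finitely many points $\alpha_i$ covering the compact set $(\D^-)^d\setminus U$. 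With these extra coordinates, spectral determinacy is verified directly and elementarily: for a sequence $\{T_n\}$ in $\fgamgen$ the operators $Y^r=\oplus_n R_{\alpha_i}^r(T_n)$ are uniformly bounded, one gets $\sum_r Y^r(T^r-\beta^r)=1-X$ with $\|X\|\le s<1$, so no point of $(\D^-)^d\setminus U$ lies in $\sigma(\oplus_n T_n)$, and the spectral mapping theorem then places the spectrum in $K_\delta$. What the longer construction buys is that Theorem~\ref{lem4.10} remains independent of Theorem~\ref{thm6.10}, whose proof uses the Cartan extension theorem; this matters because Theorem~\ref{lem4.10} feeds the Oka--Weil theorem (Theorem~\ref{thm5.20}) and the Cartan-type theorem of Section~\ref{sech}, and the paper's stated escape from circularity there (``one does not need Theorem~\ref{thm6.10} when Question~\ref{ques4.10} has a positive answer'') would be weakened if Theorem~\ref{lem4.10} itself were routed through Theorem~\ref{thm6.10}. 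Your route buys brevity; the paper's buys elementarity and a cleaner logical architecture.

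One caveat you inherit by using Theorem~\ref{thm6.10} as a black box: its proof must choose $t<1$ with $f$ holomorphic on $G_{t\gamma}$, which tacitly requires the polyhedra $G_{t\gamma}$ to shrink into an arbitrary neighborhood of $K_\gamma$ as $t\uparrow 1$ (for instance, that $K_{t\gamma}$ be compact in $U$ for $t$ near $1$). Strict bounding only forces $G_{t\gamma}$ to be a bounded set; it does not by itself prevent its closure from meeting $\partial U$. The paper is silent on this point where it applies Theorem~\ref{thm6.10}, so your proof is no worse off than those applications, but the paper's direct construction of $\gamma$ never needs to shrink the polyhedra and so sidesteps the issue entirely.
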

\begin{proof}
Without loss of generality, we may assume that $\delta$ is bounding and $\normdelgen{\lambda^r} \le 1$ for each $r$. Note that this implies that
if $T \in \fdelgen$, then $\norm{T^r} \le 1$ for $r=1,\ldots,m$. Consequently,
\be\label{eq4.50}
\text{if } \{T_n\} \text{ is a sequence in } \fdelgen \text{, then } \sigma(\oplus_{n=1}^\infty T_n) \subseteq (\D^-)^m.
\ee

We now define $\gamma$. Let $E=(\D^-)^m \setminus U$. (The set $E$ may be empty, in which case
we let $\gamma = \delta$, and skip to the last paragraph of the proof).
By Theorem \ref{thm4.40}, for each $\alpha \in E$ there exist functions $R_\alpha^1,\ldots,R_\alpha^d \in O(U)$ such that
\be\label{eq4.60}
\sum_{r=1}^n R_\alpha^r(\lambda) (\lambda^r - \alpha^r) = 1
\ee
for all $\lambda \in U$. We set
\be\label{eq4.70}
M_\alpha^r = \max_{\lambda \in K_\delta}|R_\alpha^r(\lambda)|
\ee
and let $M_\alpha = (M_\alpha^1,\ldots,M_\alpha^d)$ so that $\norm{M_\alpha}$, the Euclidean norm of $M_\alpha$, is given by
\be\label{eq4.80}
\norm{M_\alpha} = (\sum_{r=1}^d|M_\alpha^r|^2)^{\frac{1}{2}}
\ee
We let $B_\alpha(\epsilon)$ denote the Euclidean ball in $\C^d$ centered at $\alpha$ and fix positive $s<1$. As $E$ is
 compact and $\set{B_\alpha(\frac{s}{\norm{M_\alpha}})}{\alpha \in E}$ is an open cover of $E$, there exist $\alpha_1,\ldots,\alpha_n \in E$ such that
\be\label{eq4.90}
E \subset \bigcup_{i=1}^n B_{\alpha_i}(\frac{s}{\norm{M_{\alpha_i}}}).
\ee
For $r=1,\ldots,d$ and $i=1,\ldots,n$ we define functions $\rho_i^r \in O(U)$ by the formulas,
\be\label{eq4.100}
\rho_i^r = \frac{1}{M_{\alpha_i}^r} R_{\alpha_i}^r
\ee
Finally, define an extension $\gamma$ of $\delta$ by setting
\be\label{eq4.110}
\gamma = (\delta_1,\delta_2,\ldots\delta_m,\ \rho_1^1,\rho_1^2,\ldots\rho_1^d,\ \ldots\ldots\rho_n^1,\rho_n^2,\ \ldots\rho_n^d).
\ee
Note that \eqref{eq4.70} and \eqref{eq4.100} imply that for each $r$ and $i$, $\max_{K_\delta}|\rho_i^r| \le 1$ so that $\gamma$ is subordinate to $\delta$.

We now show that $\gamma$, as constructed above, is spectrally bounding. Accordingly, fix a sequence $\{T_n\}$ in $\fgamgen$ and set $T = \oplus T_n$. We need to show that $\sigma(T) \subseteq K_\delta$.
\begin{claim}\label{claim4.20}
$\sigma(T) \subset U$.
\end{claim}
To prove this claim, observe that \eqref{eq4.50} implies that it suffices to prove that
\be\label{eq4.120}
\beta \in E \implies \beta \not\in \sigma(T).
\ee
To prove \eqref{eq4.120}, fix $\beta \in E$. By \eqref{eq4.90} there exists $i$ such that
\be\label{eq4.130}
\norm{\beta - \alpha_i} \le \frac{s}{\norm{M_{\alpha_i}}}
\ee
If for $S \in \fgamgen$, we define an operator $X(S)$ by the formula,
$$X(S)=\sum_{r=1}^d R_{\alpha_i}^r(S)(\beta^r-\alpha_i^r),$$
then \eqref{eq4.60} implies that
\begin{align}
\sum_{r=1}^d R_{\alpha_i}^r(S)(S^r-\beta^r)
&=\sum_{r=1}^d R_{\alpha_i}^r(S)(S^r-\alpha_i^r)-\sum_{r=1}^d R_{\alpha_i}^r(S)(\beta^r-\alpha_i^r) \notag \\
&=1-X(S).\label{eq4.140}
\end{align}
Also, since by the construction of $\gamma$ and \eqref{eq4.100} we have that
\be\label{eq4.145}
\norm{R_{\alpha_i}^r(S)} \le M_{\alpha_i}^r,
\ee
we see via \eqref{eq4.80} and \eqref{eq4.130} that
\begin{align}
\norm{X(S)} &= \norm{\sum_{r=1}^d R_{\alpha_i}^r(S)(\beta^r-\alpha_i^r)} \notag \\
&\le \sum_{r=1}^d \norm{R_{\alpha_i}^r(S)}|\beta^r-\alpha_i^r| \notag \\
&\le \sum_{r=1}^d M_{\alpha_i}^r|\beta^r-\alpha_i^r| \notag \\
&\le (\sum_{r=1}^d|M_{\alpha_i}^r|^2)^\frac{1}{2} (\sum_{r=1}^d|\beta^r-\alpha_i^r|^2)^\frac{1}{2} \label{eq4.150}\\
&=\norm{M_{\alpha_i}}\norm{\beta-\alpha_i} \notag \\
&\le \norm{M_{\alpha_i}}\frac{s}{\norm{M_{\alpha_i}}} \notag \\
&=s. \notag
\end{align}
We now let $S=T_n$ and direct sum. If we set
$$Y^r = \oplus_{n=1}^\infty R_{\alpha_i}^r(T_n),$$
then \eqref{eq4.145} implies that $Y^r$ is a well defined bounded operator that commutes with $T$. Likewise, if we set
$$X=\oplus_{n=1}^\infty X(T_n),$$
then \eqref{eq4.150} implies that $X$ is a well defined bounded operator that commutes with $T$ with the added property that
\be\label{eq4.160}
\norm{X} \le s <1.
\ee
Finally, observe that by \eqref{eq4.140},
\be\label{eq4.170}
\sum_{r=1}^d Y^r(T^r-\beta^r) = 1-X.
\ee
As \eqref{eq4.160} and \eqref{eq4.170} imply that $\sum_{r=1}^d Y^r(T^r-\beta^r)$ is invertible, it follows that $\beta \not\in \sigma(T)$. This completes the proof of (\ref{eq4.120}) which establishes Claim \ref{claim4.20}.

To complete the proof of Theorem \ref{lem4.10} we need to show that $\sigma(T) \subseteq K_\delta$. As $\sigma(T) \subset U$ and the functions $\delta_l$ are in $O(U)$, 
it follows from the Taylor functional calculus that that 
the operators $\delta_l(T)$ are well defined. Furthermore, by the spectral mapping theorem for analytic functions, $\delta(\sigma(T))=\sigma(\delta(T))$. But $\norm{\delta_l(T)} \le 1$ for each $l$, so that $\sigma(\delta(T)) \subseteq (\D^-)^m$. Hence, $\sigma(T) \subseteq \delta^{-1}((\D^-)^m) = K_\delta$.
\end{proof}

\section{The Oka Extension Theorem on Analytic Polyhedrons}
\label{sece} 

As discussed in \cite{amy12b}, the following result, Theorem \ref{thm5.10} below, represents a refinement of a classical theorem of Oka. However, unlike in \cite{amy12b}, we will not use the 
Oka-Weil Theorem.
\begin{theorem}\label{thm5.10}
Let $U$ be a domain of holomorphy and assume that $\delta_1,\ldots,\delta_m \in O(U)$ with $\delta$ strictly bounding. 
If $\phi$ is holomorphic on a neighborhood of $K_\delta$, then there exists $\Phi$ holomorphic on a neighborhood of $(\D ^m)^-$ such that $\phi(\lambda)=\Phi \circ \delta(\lambda)$ for all $\lambda \in \gdel$. Moreover, $\Phi$ can be chosen so that $\forall\, t < 1$ and sufficiently close to $1$, and $\forall\, \vare > 0$, we have 
\[
\| \Phi \|_{H^\i_{t {\bf m}}}  \ \leq \
(1 + \vare) \| \phi \|_{\hinftdelgen} .
\]
\end{theorem}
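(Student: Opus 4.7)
The plan is to derive Theorem~\ref{thm5.10} from the utility result Theorem~\ref{thm6.10} combined with the representation Theorem~\ref{thm3.10}, appropriately rescaling a realization of $\phi$ so that the extension lives on a neighborhood of $\overline{\mathbb{D}^m}$.

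First I would fix the parameter $t<1$. Since $\delta$ is strictly bounding, there is some $t_0 < 1$ with $t_0\delta$ bounding. For any $t \in (t_0,1)$ the inclusion $\ftdelgen \subseteq \mathcal{F}_{t_0\delta,{\rm gen}}$ shows that $t\delta$ is again bounding, and in fact strictly bounding. Because the compact sets $K_{t\delta}$ shrink down to $\kdel$ as $t \nearrow 1$, I may further arrange that $K_{t\delta}$ lies inside the neighborhood on which $\phi$ is holomorphic. Theorem~\ref{thm6.10} applied to $t\delta$ then yields $\phi \in \hinftdelgen$, with $M := \normtdelgen{\phi} < \infty$.

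Next, given $\varepsilon > 0$, set $M_\varepsilon = (1+\varepsilon) M$ so that $\phi / M_\varepsilon$ lies strictly inside the unit ball of $\hinftdelgen(\gtdel)$. Applying the implication (a) $\Rightarrow$ (d) of Theorem~\ref{thm3.10} to $t\delta$ with $E = \gtdel$ yields $G \in \ball \hinfm$ such that
\[
\phi(\lambda) \= M_\varepsilon \, G(t\delta(\lambda)), \qquad \lambda \in \gtdel.
\]
Define $\Phi(z) := M_\varepsilon \, G(tz)$ for $z \in t^{-1}\mathbb{D}^m$. Because $t<1$, the set $t^{-1}\mathbb{D}^m$ is a neighborhood of $\overline{\mathbb{D}^m}$, $\Phi$ is holomorphic there, and by construction $\Phi \circ \delta = \phi$ on $\gdel \subseteq \gtdel$.

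For the norm estimate, if $S$ is a commuting $m$-tuple admissible for $H^\infty_{t\mathbf{m}}$, then $tS$ is a commuting tuple of contractions with spectrum in $\mathbb{D}^m$, so $\|G(tS)\| \leq \normbm{G} \leq 1$ by the functional calculus of Definition~\ref{def3.30}. Consequently
\[
\|\Phi(S)\| \= M_\varepsilon \|G(tS)\| \leq (1+\varepsilon)\, \normtdelgen{\phi},
\]
and taking the supremum over $S$ gives the desired inequality. The main obstacle I anticipate is pinning down the admissible range of $t$ — simultaneously ensuring that $t\delta$ is strictly bounding, that $\phi$ extends holomorphically past $K_{t\delta}$, and that the rescaling $G \mapsto G(t\,\cdot)$ interacts correctly with the precise definition of $\|\cdot\|_{H^\infty_{t\mathbf{m}}}$ the authors use; once these are clarified, the remainder is a direct chain of substitutions.
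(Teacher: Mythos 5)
Your argument follows the paper's proof essentially verbatim: choose $t<1$ with $\ftdelgen$ bounded and $\phi$ holomorphic on $\gtdel$, invoke Theorem~\ref{thm6.10} to get $\phi \in \hinftdelgen$, apply the implication (a)$\Rightarrow$(d) of Theorem~\ref{thm3.10} for $t\delta$, and rescale via $\Phi(z)=\Psi(tz)$; your explicit normalization and check of the norm estimate (which the paper leaves implicit) is a correct supplement. The only caveat, shared with the paper's own treatment, is that your $\Phi$ is constructed from one fixed admissible $t$ (and your choice even carries an $\varepsilon$-dependence that could be dropped, since Theorem~\ref{thm3.10} uses the closed unit ball), so the reading of the ``moreover'' clause in which a single $\Phi$ works for all $t$ sufficiently close to $1$ is not literally addressed --- but the paper's proof does not address it either.
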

\begin{proof}
Let $\phi$ be holomorphic on a neighborhood of $K_\delta$. Choose $t < 1$ such that $\ftdelgen$ is bounded and $\phi$ is holomorphic on $\gtdel$. By Theorem \ref{thm6.10}, $\phi \in \hinftdelgen$. Hence, by Theorem \ref{thm3.10}, there exists $\Psi \in \hinfm$ such that $\phi(\lambda) = \Psi(t\delta(\lambda))$ for all $\lambda \in \gtdel$. If we define $\Phi$ by $\Phi(z)=\Psi(tz)$, then $\Phi$ is holomorphic on a neighborhood of $(\D ^m)^-$ and $\phi(\lambda)=\Phi \circ \delta(\lambda)$ for all $\lambda \in \gdel$.
\end{proof}

We also get a generalization of the Oka-Weil theorem.
\begin{theorem}\label{thm5.20}
Let $U$ be a domain of holomorphy and assume that $\delta_1,\ldots,\delta_m \in O(U)$ with $K_\delta$ bounded in $\C^d$. If $\phi$ is holomorphic on a neighborhood of $K_\delta$, then $\phi$ can be uniformly approximated on $K_\delta$ by holomorphic functions on $U$.
\end{theorem}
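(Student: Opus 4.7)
The plan is to reduce Theorem~\ref{thm5.20} to Theorem~\ref{thm5.10} by passing to a cleverly enlarged tuple and then expand the resulting polydisk function as a Taylor series. Concretely, since $K_\delta$ is a compact subset of $\C^d$, each coordinate function $\lambda^r$ is bounded on $K_\delta$, say $\sup_{K_\delta} |\lambda^r| \le M$. Pick $\epsilon > 0$ with $\epsilon M < 1$ and form the subordinate extension
\[
\gamma \= (\delta_1,\ldots,\delta_m,\epsilon\lambda^1,\ldots,\epsilon\lambda^d) \inn O(U)^{m+d},
\]
exactly as in \eqref{eq4.40}. Because $\epsilon M < 1$ (with room to spare), we have $K_\gamma = K_\delta$.

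The first key step is to verify that $\gamma$ is strictly bounding. For any $T \in \mathcal{F}_{t\gamma,\mathrm{gen}}$ the last $d$ components of $t\gamma(T)$ being contractions say exactly that $\|t\epsilon T^r\| \le 1$, so $\|T^r\| \le 1/(t\epsilon)$ is bounded uniformly. Since this holds for every $t > 0$, we may take any $t < 1$ and conclude that $\gamma$ is strictly bounding. Applying Theorem~\ref{thm5.10} to $\phi$ with respect to $\gamma$ (using that $\phi$ is holomorphic near $K_\gamma = K_\delta$) yields a function $\Phi$ holomorphic on a neighborhood $W$ of the closed polydisk $(\D^{m+d})^-$ satisfying
\[
\phi(\lambda) \= \Phi\bigl(\gamma(\lambda)\bigr) \qquad \text{for all } \lambda \inn G_\delta.
\]

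By continuity and the identity $K_\gamma = K_\delta$, the equation $\phi = \Phi \circ \gamma$ persists on $K_\delta$. Now $\Phi$, being holomorphic on a neighborhood of $(\D^{m+d})^-$, is the uniform limit on $(\D^{m+d})^-$ of the partial sums of its Taylor series at the origin; call these polynomials $p_n \inn \C[z^1,\ldots,z^{m+d}]$. Set
\[
q_n(\lambda) \= p_n\bigl(\gamma(\lambda)\bigr) \= p_n\bigl(\delta_1(\lambda),\ldots,\delta_m(\lambda),\epsilon\lambda^1,\ldots,\epsilon\lambda^d\bigr).
\]
Each $q_n$ lies in $O(U)$ since it is a polynomial in the components of $\gamma \in O(U)^{m+d}$. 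Moreover, because $\gamma(K_\delta) \subset (\D^{m+d})^-$, we have
\[
\sup_{\lambda \in K_\delta} \,\bigl| \phi(\lambda) - q_n(\lambda)\bigr| \ \le \ \sup_{z \in (\D^{m+d})^-} \,\bigl| \Phi(z) - p_n(z)\bigr| \ \longrightarrow \ 0,
\]
which is the required uniform approximation on $K_\delta$ by elements of $O(U)$.

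The only step that requires genuine care is verifying that the subordinate extension $\gamma$ is \emph{strictly} (not merely) bounding; this uses in an essential way the boundedness of $K_\delta$ to give us an $\epsilon$ with room to spare. Once this is in hand, the rest is a pure Taylor-series argument on a polydisk followed by pullback, and no appeal to Oka–Weil is needed — the work has already been done inside Theorem~\ref{thm5.10}.
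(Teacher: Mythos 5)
Your reduction via the extension $\gamma=(\delta_1,\ldots,\delta_m,\epsilon\lambda^1,\ldots,\epsilon\lambda^d)$ of (\ref{eq4.40}), the verification that $\gamma$ is strictly bounding, and the Taylor-series endgame are all fine; but one step does not stand as written. Theorem~\ref{thm5.10}, as stated, gives $\phi=\Phi\circ\gamma$ only on $G_\gamma=G_\delta$, and you then assert that the identity ``persists on $K_\delta$ by continuity.'' Continuity only propagates the identity to $K_\delta\cap\overline{G_\delta}$, and in general $K_\delta$ is \emph{not} contained in $\overline{G_\delta}$: a point where $\max_l|\delta_l|=1$ need not be a limit of points of $G_\delta$. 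For instance, with $d=1$, $U=\C$, $\delta=(\lambda,\,2-\lambda)$ one has $K_\delta=\{1\}$ (compact) while $G_\delta=\emptyset$, so the cited conclusion of Theorem~\ref{thm5.10} is vacuous and places no constraint on $\Phi(\gamma(1))$; your polynomials $q_n=p_n\circ\gamma$ then converge uniformly on $K_\delta$ to $\Phi\circ\gamma$, which the statement you invoked does not force to equal $\phi$ there. The repair is cheap: inside the proof of Theorem~\ref{thm5.10} (equivalently, by Theorem~\ref{thm6.10} together with the equivalence (a)$\Leftrightarrow$(d) of Theorem~\ref{thm3.10} applied with $E=G_{t\gamma}$) the representation $\phi(\lambda)=\Psi(t\gamma(\lambda))$ holds for all $\lambda\in G_{t\gamma}$, and $G_{t\gamma}\supseteq K_\gamma=K_\delta$; once you quote that, the identity $\phi=\Phi\circ\gamma$ is available on all of $K_\delta$ and your final estimate goes through.

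With that patch your argument is a genuinely different route from the paper's. The paper does not pass through Theorem~\ref{thm5.10}/\ref{thm6.10} here: it invokes Theorem~\ref{lem4.10} to manufacture a spectrally determining subordinate extension (the corona-type construction), then Theorem~\ref{thm4.20} to get utility, and Theorem~\ref{thm3.10} to obtain $\Phi$ merely in $H^\infty_{\bf n}$ on the open polydisk, finishing with Taylor partial sums. Your route uses the much simpler extension (\ref{eq4.40}) and strict boundedness, which buys you a $\Phi$ holomorphic on a neighborhood of the \emph{closed} polydisk, so uniform convergence of the partial sums on $\gamma(K_\delta)\subseteq(\D^-)^{m+d}$ is immediate --- arguably cleaner than the paper's final step, where $\Phi$ is a priori only bounded on the open polydisk. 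The trade-off is in the dependencies: Theorem~\ref{thm6.10} is proved using the Cartan extension theorem, so your proof of this Oka--Weil statement inherits that input, whereas the paper's route stays within the Taylor-calculus/spectral-determination machinery and avoids Cartan, a distinction the paper itself cares about in Section~\ref{sech}.
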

\begin{proof}
By Theorem~\ref{lem4.10}, we can 
choose $\gamma \in O(U)^n$ such that $\gamma_l = \delta_l$ for $1 \le l \le m$,
$K_\gamma = \kdel$, and $\gamma$ is spectrally determining. By Theorem~\ref{thm4.20},
$\gamma$ is utile, so $\phi \in H^\i_\gamma$. By Theorem~\ref{thm3.10}, there is a function
$\Phi$ in $H^\i_{\bf n}$ such that $\phi = \Phi \circ \gamma$. The partial sums of the Taylor series of $\Phi$, composed with $\gamma$, give a sequence of polynomials in $\gamma$ that 
converge uniformly to $\phi$ on $\kdel$.
\end{proof}
\section{A Cartan Extension Theorem}
\label{sech}
In this section $U$ is a domain of holomorphy in $\C^d$.
We assume that $V$ is an analytic set in $U$ with the  special presentation:
there exists  $\eta = (\eta_1, \ldots,\eta_n) \in O(U)^n$ such that
\be\label{eq6.10}
V \= \{ \l \in U \ : \ \eta(\l) = 0 \} .
\ee
Any domain of holomorphy $U$ can be exhausted by a sequence  $G_{\d^k} \subset K_{\d^k} \subset G_{\d^{k+1}} \subset U$ of analytic
polyhedra, 
by \cite[Cor. II.3.11]{ran86}. Choose such a sequence.
We make a second assumption about $V$:
\se\att
\begin{eqnarray}
\nonumber
\forall\  \vare > 0, \forall\  k \geq 1, \ & [ F \in H^\i_{\d^{k+1}, {\rm gen}} ] \wedge [ F |_{V \cap G_{\d^{k+1}}} = 0 ]  \\
\Rightarrow  &
\exists\  \phi_r  \in \ H^\i_{\d^{k}, {\rm gen}}\
{\rm s.t.\ }
\| F - \sum_{r=1}^n \eta_r \phi_r \|_{H^\i_{\d^{k},{\rm gen}}}
\leq \vare .
\label{eq7.11}
\end{eqnarray}

Inequality (\ref{eq7.11}) will hold, for example, (once we augment each $\d^k$ 
to make it utile as we can by Theorem~\ref{lem4.10})
if there is some compact set $L \subset U$ with the
property that whenever $\Omega$ is an open set with $ L \subset \Omega \subset U$ and
$F \in O(\Omega)$ vanishes on $ V \cap \Omega$, then $F$ is in the ideal in $O(\Omega)$ generated by $\eta$.

%
%
%

In this section we wish to prove
a special case of the Cartan extension theorem, first proved in \cite{car51};
see \cite[Thm. I.5]{gun3} for a more recent treatment.
It can be argued that the logic is circular, as we use Theorem~\ref{thm6.10}, whose proof
used the Cartan extension theorem. However, whenever the answer to Question~\ref{ques4.10} is known to be yes, such as for a $p$-polyhedron by Proposition~\ref{prop4.30}, 
one does not need Theorem~\ref{thm6.10}.

%
%
%
%
%
%

\bt
\label{thm72}
Let $V \subseteq U$ satisfy (\ref{eq6.10}) and (\ref{eq7.11})
 Suppose $f$ is holomorphic on a neighborhood $\Omega$ of
$V$. Then $f$ has a holomorphic extension to $U$.
\et
\bp 
 By Theorem \ref{lem4.10}, one can assume that each
 $\d^k = (\d^k_1, \dots, \d^k_{n_k})$ is  spectrally determining, and therefore bounding. Moreover, by multiplying $\d^k$ by a number slightly greater than one (and so slightly shrinking
$G_{\d^k}$), one can assume that each $\d^k$ is strictly bounding.

For each $k$, there exists a positive number $M^k$ such that
  $$
\beta^k\  := \ ( \d^k_1, \dots, \d^k_{n_k},  M^k \eta_1,  \dots, M^k \eta_n)$$
will satisfy $K_{\beta^k} \subset \subset \Omega \cap G_{\d^{k+1}}$.
 As $\beta^k$ is an extension of $\delta^k$,
it is {\em a fortiori} strictly bounding, so by Theorem~\ref{thm6.10} it is utile.
Therefore by Theorem~\ref{thm3.10}, we can find a function $\Psi^k$ in $H^\i_{\bf  n_k + n}$ such that
$\Psi^k \circ \beta^k$ is a holomorphic function on $G_{\beta^k}$ that equals
 the extension of
$f$ to $\Omega \cap G_{\d^k}$.

 Now let \[
\Phi^k(\l^1, \dots,  \l^{n_k}) 
\=
\Psi^k(\l^1, \dots, \l^{n_k}, 0, \dots, 0) .
\]
Then $\Phi^k \circ \d^k$ is holomorphic on $G_{\d^k}$, and agrees with $f$ on $V \cap G_{\d^k} $. Define
\[
h^k(\l^1, \dots , \l^d) \= \Phi^k( \d^k(\l)).
\]
Each $h^k$ is in $H^\i_{\d^k, {\rm gen}}$ and provides an extension of $f$ to $G_{\d^k}$; we would like to get an extension on all
of $U$ at once. 

Let us define new functions inductively.
To start, we let $g^1 = h^1, g^2 = h^2$ and $g^{3} = h^{3}$.

 Having defined $g^k$ in $H^\i_{\d^k, {\rm gen}}$ with $$
g^k|_{V \cap G_{\d^k}} \=
f|_{V \cap G_{\d^k}}, $$
observe that $
h^{k+1} - g^k$ is in $H^\i_{\d^k, {\rm gen}}$ and vanishes on 
$V \cap G_{\d^k}$.
So by assumption (\ref{eq7.11}), 
$h^{k+1} - g^k$ can be approximated in $H^\i_{\d^{k-1}, {\rm gen}}$ by functions
of the form $\sum_{r=1}^n \eta_r \phi_r$. By Theorem~\ref{thm3.10}, each
$\phi_r$ in turn can be uniformly approximated on $K_{\d^{k-2}}$ by polynomials
in $\d^{k-1}$, so in particular by functions holomorphic on all of $U$.
Therefore there is a function $q^{k+1}$ in $O(U)$ such that
$q^{k+1} |_V = 0$ and
\be
\label{eq77}
\| h^{k+1} - q^{k+1}  - g^k \|_{H^\i( G_{\d^{k-2}}) } 
\ \leq \ 2^{-k} .
\ee

Define $g^{k+1} = h^{k+1} - q^{k+1} $.
By (\ref{eq77}), the functions $(g^k)$ will be a Cauchy sequence on every compact subset
of $U$, so, by passing to a subsequence, one can extract a limit that is holomorphic on $U$.
Call this limit function $F$. As each $g^{k}$ agrees with $f$ on $V \cap G_{\d^k}$,
we conclude that $F$ is our desired extension of $f$ to $U$.
\ep


%
%

%
\section{Functional Calculus}
\label{secfc}

We shall use formula (\ref{eqc17}) to define a functional calculus for $\hinfdelgen$.
For $\M = \oplus_{l=1}^m \M_l$ a graded Hilbert space, let $P_l$ be the orthogonal projection
onto $\M_l$. If $S = (S^1, \dots , S^m)$ is an $m$-tuple of operators on a Hilbert space
$\h$, we shall let $S_P$ denote the operator on $\h \otimes \M$ given by
\[
S_P \= \sum_{l=1}^m S^l \otimes P^l .
\]
In this notation, $\d(\l)$ from (\ref{eqc16}) becomes $\delta(\l)_P$.
For any vector $\xi \in \M$, define
\[
R_\xi : \h \to \h \otimes \M,\quad v \mapsto v \otimes \xi .
\]

\bd
\label{deffc1}
Let $f$ be a function on $\gdel$ with a $(\d,\gdel)$ realization $(a,\beta,\gamma,D)$ as in
Defintion~\ref{def3.10}. Let $T \in \F_\delta$ be a $d$-tuple of commuting operators
on a Hilbert space $\h$, and assume that $\sigma(T) \subset \gdel$. 
Then we define $f(T)$
by
\be\label{eqfc1}
f(T) \ := \
 a I_\h + R_\beta^* \  \delta(T)_P \
 [ I_h \otimes I_\M -  (I_\h \otimes D) \delta(T)_P]^{-1}\  R_\gamma.
\ee
\ed
Some remarks are in order. 

1. Let $S = (\delta_1(T), \dots , \delta_m(T))$.
The assertion  $\sigma(T) \subset \gdel$ seems to require the calculation of the Taylor spectrum
of $T$. But by the spectral mapping theorem, this becomes the much more innocuous assertion that
\[
\sigma(S^l) \subset \D, \ \forall\  1 \leq l \leq m .
\]

2. To use (\ref{eqfc1}), we need to be able to define $\delta_l(T)$ for each $1 \leq l \leq m$.
If $\delta_l$ is a polynomial or rational function, this is immediate; or if $\delta$ itself comes from
a $(\delta', \gdel)$ realization, we can define $\delta(T)$ by the analogous formula. For general
$\delta_l$, it is still possible to define $\delta(T)$ in some circumstances --- for example if
$T$ is a multiplication operator on a space of holomorphic functions. It is, however, essential to the
definition that we know how to define $\delta(T)$.

3. In order for the right-hand side of (\ref{eqfc1}) to make sense, we need to know that
$ [ I_h \otimes I_\M -  (I_\h \otimes D) S_P]$ is invertible. This follows from Proposition~\ref{propfc1} below.

4. To speak unambiguously of $f(T)$, we need to know that the operator defined
by (\ref{eqfc1}) and the one defined by the Taylor functional calculus agree. This follows from
Theorem~\ref{thmfc2} below.

5. For our notion of functional calculus to be useful, we would like to know that
$\hinfdelgen$ has a rich supply of functions. In Section~\ref{secd} we address the issue
of when every function holomorphic on a neighborhood of $\kdel$ is in $\hinfdelgen$.

\bprop
\label{propfc1}
The operator $I_\h \otimes I_\M -  (I_\h \otimes D) \delta(T)_P$ is invertible
whenever the Taylor spectrum of $T$ is in $\gdel$.
\eprop
\bp
It is sufficient to prove that if
$\sigma(S) \subset \D^m$, then
$\sigma [(I_\h \otimes D) S_P] \subset \D$.
By Lemma~\ref{lemfc1}, there is an invertible operator $A$ so that if $R = A^{-1} S A$,
then each $R^l$ is a strict contraction.
Then
\beq
\lefteqn{
(A^{-1} \otimes I_\M) \left[(I_\h \otimes D)(\sum_{l=1}^m S^l \otimes P^l) \right] (A \otimes I_\M)}
\\
&=& 
(I_\h \otimes D)(\sum_{l=1}^m R^l \otimes P^l ).
\eeq
The latter expression is a product of a strict contraction with a contraction, so its spectrum is
in $\D$. So  as $(I_\h \otimes D)S_P $ is similar to it, the spectrum is the same.
\ep
The following lemma is multivariable version of a well-known result of G.-C. Rota \cite{ro60}.
\bl
\label{lemfc1}
If $\sigma(S) \subset \D^m$, then
there is an invertible operator $A$ so that $\| A^{-1} S^l A \| < 1,\ 1 \leq l \leq m$.
\el
\bp
As each $S^l$ has spectral radius less than $1$, choose $N > 0 $ such that 
$\| (S^l)^N \| \leq C < 1, \ 1 \leq l \leq m$. Define a new norm on $\h$ by
\[
||| v ||| \= \frac{1}{N^m} \sqrt{\sum_p \| p(S) v \|^2} ,
\]
where the sum is over all monomials $p$ of degree at most $N-1$ in each variable.
The norm $||| \cdot |||$ satisfies the parallelogram law, so is a Hilbert space norm.
Moreover, it is similar to the norm $\| \cdot \|$, and
\beq
||| S^l v |||^2 - ||| v |||^2
&\=&
\frac{1}{N^{2m}} \sum_q \| (S^l)^N q(S) v \|^2 - \| q(S) v \|^2 \\
&\leq&
\frac{1}{N^{2m}}\sum_q (C^2 -1) \| q(S) v \|^2  \\
& \leq&
\frac{1}{N^{2m}} (C^2 -1) \| v \|^2
\eeq
where $q$ ranges over monomials that have no positive powers of $z^l$.
So in the $||| \cdot |||$ norm, each $S^l$ is a strict contraction.
Let $A$ be the similarity between the Hilbert space with the $||| \cdot |||$ norm
and the original $\| \cdot \|$ norm.
\ep
\bt
\label{thmfc2}
Assume $f$ and $T$ satisfy the hypotheses of Definition~\ref{deffc1}.
Then the operator $f(T)$ defined be the Taylor functional calculus equals the
operator defined by (\ref{eqfc1}).
\et
\bp
Let $F \in  \ball \hinfm$ be defined by (\ref{eqc23}), so $F(\delta(\l) ) = f(\l)$.
As the Taylor functional calculus respects composition, it is enough to prove
that $F(S)$ defined by the Taylor calculus and $F(S)$ defined by
\be
\label{eqfc3}
a I_\h + R_\beta^* \  S_P \
 [ I_h \otimes I_\M -  (I_\h \otimes D) S_P]^{-1}\  R_\gamma
\ee
agree. 
By the argument in the proof of Proposition~\ref{propfc1}, the contraction
$(I_\h \otimes D) S_P$ has spectral radius less than $1$, so the Neumann series
expansion of (\ref{eqfc3}) converges absolutely. The partial sums form a sequence of
 polynomials in $S$ that converge to $F(S)$ in norm, and by continuity
they also converge to $F(S)$  defined by the Taylor functional calculus.
\ep

\bibliography{../references}.
\end{document}